% LaTeX 2e document, TAC style, 12 pp, Xy-pic ver 3.9, pdfTeX 3.1415926-2.5-1.40.13 (TeX Live 2013/dev)
%TODO diamond counterexample showing that the generalized core is not a special case of Inna's pulling back model structures 
\documentclass[11pt]{amsart}

\usepackage[latin1]{inputenc}
\usepackage[T1]{fontenc}
\usepackage{amssymb}
\usepackage{amsmath}
\usepackage[english]{babel}
\usepackage{graphicx}
\usepackage{epsfig}
\usepackage[mathscr]{eucal}
\usepackage{color}
\usepackage{latexsym}
\usepackage{stmaryrd}
\usepackage[all]{xy}
\usepackage{url}

%\geometry{ hmargin=2cm, vmargin=3.5cm }

% environement de type theoreme

%\usepackage{theorem}
\theoremstyle{plain}
\newtheorem{thm}{Theorem}[section]
\newtheorem{theorem}[thm]{Theorem}
\newtheorem{prop}[thm]{Proposition}
\newtheorem{corollary}[thm]{Corollary}
\newtheorem{lem}[thm]{Lemma}
\theoremstyle{definition}
\newtheorem{definition}[thm]{Definition}
\newtheorem{question}{Question}
\theoremstyle{remark}
\newtheorem{ex}[thm]{Example}
\newtheorem{example}[thm]{Example}
\newtheorem{remark}{Remark}

\newcommand{\Pos}{\mathbf{PreOrd}} 
\newcommand{\Cat}{\mathbf{Cat}}
\newcommand{\Set}{\mathbf{Set}}
\newcommand{\llp}{\boxslash}
\AtBeginDocument{\DeclareMathSymbol{:}{\mathpunct}{operators}{"3A}}
\newcommand{\LL}{\mathcal{L}}
\newcommand{\RR}{\mathcal{R}}
\newcommand{\C}{\mathcal{C}}
\newcommand{\D}{\mathcal{D}}
\newcommand{\E}{\mathscr{E}}
\newcommand{\CC}{\mathbb{C}}
\newcommand{\initial}{\varnothing}
\newcommand{\I}{\mathscr{I}}
\renewcommand{\P}{\mathscr{P}}
\renewcommand{\S}{\mathcal{S}}
\DeclareMathOperator{\Hom}{Hom}
\DeclareMathOperator{\ob}{ob}

\DeclareMathOperator{\iso}{iso}
\DeclareMathOperator{\Ho}{Ho}
\newcommand{\defeq}{\stackrel{\mathrm{def}}{=}}
\newcommand{\rto}{\rightarrow}

\newcommand{\fib}{\ar@{->>}}
\newdir{ (}{{}*!/-5pt/\dir^{(}}
\newcommand{\cofib}{\ar@{ (->}} %)

% NOTE: this is required...

%\copyrightyear{2013}

% NOTE: the next three are optional in this style (but \amsclass and \eaddress are required for TAC publication)

%\keywords{model category, graph}
%\amsclass{18G55}
% NOTE: that \CR here provides a vertical listing

%\eaddress{droz.jm@gmail.com\CR zakh@math.uchicago.edu}

\begin{document}

\author{Jean-Marie Droz and Inna Zakharevich}

%\thanks{The second author was funded by the University of Chicago, the Institute for Advanced Study, and an NSF MSRFP grant.}

%\address{Jean-Marie Droz\\28 Rue Jean-Grimoux \\
%1700 Fribourg, Switzerland\\[5pt]
%Inna Zakharevich\\
% School of Mathematics, Institute for Advanced Study\\ Einstein Drive, Princeton, NJ 08540, USA \\
%}

\title{Model categories with simple homotopy categories}

\begin{abstract}
  In the present article we describe constructions of model structures on
  general bicomplete categories. We are motivated by the following question:
  given a category $\C$ with a suitable subcategory $w\C$, when is there a model
  structure on $\C$ with $w\C$ as the subcategory of weak equivalences?  We
  begin exploring this question in the case where $w\C = F^{-1}(\iso \D)$ for
  some functor $F:\C\rightarrow \D$.  We also prove properness of our
  constructions under minor assumptions and examine an application to the
  category of infinite graphs.
\end{abstract}
\maketitle

%\tableofcontents

\section{Introduction}

Model categories are very useful structures for analyzing the homotopy-theoretic
properties of various problems.  However, constructing these structures is
generally difficult; often, only the weak equivalences arise naturally, and much
effort must be expended to find compatible sets of cofibrations and fibrations.
(For examples of this, see \cite{hirschhorn}, \cite{hovey99}, chapter VII of
\cite{goerssjardine}, \cite{bergner07}, or the discussion of various model
structures of spectra in \cite{mmss}.)  This paper is the first in a series
which explores the general structure of such problems.  It attempts to answer
the following question:
\begin{question}
  Given a bicomplete category $\C$, together with a subcategory $w\C\subseteq
  \C$ which is closed under two-of-three and retracts, when is there a model
  structure on $\C$ such that $w\C$ is the subcategory of weak equivalences?
\end{question}
This question is very difficult, and we do not possess a complete answer to it.
However, the study of some cases has yielded many interesting families of
examples, and we present the first few here.

Often the subcategory $w\C$ is obtained through a functor $F:\C\rightarrow \D$
by defining $w\C \defeq F^{-1}(\iso\D)$.  In this paper we address the case when
$\D$ is a preorder: a category where $|\Hom(A,B)| \leq 1$ for all objects
$A,B\in \D$.  Although it turns out that we cannot answer this question in full
generality even with this simplification, we answer it in the following three
cases:
\begin{enumerate}
\item $F$ has a right adjoint which is a section.
\item $F:\C\rightarrow \E$, where $\E$ is the category with two objects and one
  noninvertible morphism between them.
\item $F=R_\C$, where $R_\C$ is the universal functor from $\C$ to a preorder.
\end{enumerate}
In fact, it turns out that the methods which allow us to answer these questions
answer more general questions than the one asked here.  For example, the
construction which gives the model structure in case (2) can also be used to
construct a model structure where the noninvertible weak equivalences are the
preimage of only one of the objects.  Whenever possible we state the results we
obtain in full generality, only applying them to the case when $\D$ is a
preorder when necessary.

We will spend the majority of our time on the third type of model structure, as
it is the one with the most interesting applications.  It generalizes a model
structure on the category of finite graphs constructed in \cite{droz12}, and in
this case gives an interesting homotopy-theoretic perspective on what the
notion of a ``core'' for an infinite graph should be.  (This will be discussed
in Section~\ref{infinite-cores}.)

The following theorem sums up the main results of the paper.
\begin{theorem}
  Let $F:\C\rightarrow \D$ be a functor as described in cases 1-3.  There exists
  a model structure on $\C$ such that the weak equivalences are $F^{-1}(\iso
  \D)$.  This model structure is left proper.
\end{theorem}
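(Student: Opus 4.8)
The plan is to handle the three cases within a single framework and isolate what is special to each. In all cases put $\mathcal{W} \defeq F^{-1}(\iso\D)$. The two-of-three and retract axioms for $\mathcal{W}$ require no hypotheses on $F$ whatsoever: if two of $f$, $g$, $gf$ lie in $\mathcal{W}$, then two of $F(f)$, $F(g)$, $F(g)F(f)$ are isomorphisms, hence so is the third, and isomorphisms are closed under retracts; applying $F^{-1}$ transports both statements back to $\C$. Thus the entire content of the existence assertion is the production of cofibrations and fibrations compatible with $\mathcal{W}$, together with the two factorizations and the lifting axiom. Throughout I would fix one of the two remaining classes outright and define the other by a lifting property, so that the lifting axiom becomes formal and all the real content is concentrated in the factorizations.

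The main obstacle is precisely these factorizations. Since $\C$ is only assumed bicomplete --- not locally presentable --- the small object argument is unavailable, so the factorizations must be built by hand, and this is where the three hypotheses do their work. In case~(1) I would transfer the trivial (``discrete'') model structure on the preorder $\D$ --- in which the weak equivalences are the isomorphisms and every map is both a cofibration and a fibration --- across the adjunction $F\dashv G$. This forces the fibrations of $\C$ to be all maps and the trivial cofibrations to be the isomorphisms, so that one factorization is immediate; the other factorization and the acyclicity condition are exactly what the section $G$ with $FG\cong\mathrm{id}_\D$ is there to supply, the unit $\eta_X\colon X\to GFX$ (which lies in $\mathcal{W}$ because $F(\eta_X)$ is an isomorphism by the triangle identity) providing the required functorial replacements. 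Case~(3) is the reflection $R_\C\colon\C\to\mathrm{Pre}(\C)$ onto the associated preorder, a left adjoint to the inclusion of preorders into categories; here $\mathcal{W}$ consists of the maps $A\to B$ that also admit a map $B\to A$, and the factorizations are assembled from the combinatorial structure exactly as in the finite-graph model structure of \cite{droz12}, with the fibrant--cofibrant objects playing the role of ``cores.''

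Case~(2) I would treat concretely. Since $\E$ has only identity isomorphisms, $\mathcal{W}$ is the class of maps lying within a single fiber of the partition of $\ob\C$ into $\C_0$ and $\C_1$ (the preimages of the two objects), the non-weak-equivalences being the maps that cross from $\C_0$ to $\C_1$. After fixing an explicit class of cofibrations adapted to this partition, the one genuine task is to factor a crossing map $X\to Y$ as a within-fiber map followed by a fibration, which I would carry out by an explicit construction from the products and coproducts of $\C$ rather than from the small object argument; the same construction restricted to a single fiber yields the promised variant in which only one object's preimage is inverted.

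Finally, left properness. In cases~(1) and~(3) the functor $F$ is a left adjoint and hence preserves pushouts; since cobase change of an isomorphism is an isomorphism, applying $F$ to a pushout of a map in $\mathcal{W}$ exhibits the result again as an isomorphism, so that $\mathcal{W}$ is in fact stable under arbitrary pushout --- more than left properness demands. In case~(2), where $F$ need not preserve pushouts, I would read off left properness directly from the explicit description of the cofibrations, again using that a cobase change cannot turn a fiber-preserving map into a crossing one. Assembling the three constructions with these properness checks yields the theorem.
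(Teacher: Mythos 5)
Your case (1) runs the transfer in the wrong direction, and the structure you specify genuinely fails to exist --- already for the paper's own example of case (1). Write $W=F^{-1}(\iso\D)$. Along the left adjoint $F$ one can only create cofibrations and weak equivalences; since every morphism of $\D$ is a cofibration, the induced structure has \emph{all} maps as cofibrations and $W^\llp$ as fibrations, which is what the paper builds in Proposition~\ref{prop:pullup}. You instead declare the fibrations to be all maps; this forces the acyclic fibrations to be $W$ and hence the cofibrations to be ${}^\llp W$, so the factorization axiom demands that every morphism factor as a map in ${}^\llp W$ followed by a map in $W$. This fails for $F=\dim: s_{inj}\Set\rto \mathbf{Z}^+_{\geq 0}$ (Example~\ref{semi-simpl-dim}): a factorization of $D_0\rto D_1$ would require some $i: D_0\rto C$ in ${}^\llp W$ with $\dim C=1$, and no such $i$ exists. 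Indeed, let $C_0$ be the $0$-skeleton of $C$ and let $g: C\sqcup C_0 \rto C\sqcup C$ be the identity on the first summand and the inclusion $C_0\rto C$ into the second summand; then $g\in W$, and the square with left leg $i$, right leg $g$, top map $D_0\rto C_0\subseteq C\sqcup C_0$ hitting the vertex $i$ hits, and bottom map $i_2: C\rto C\sqcup C$ has no lift, since any $h$ with $gh=i_2$ must land in the summand $C_0$, which contains no $1$-simplex while $C$ does. (Relatedly, the unit cannot supply your factorization: $\eta_A$ is a weak equivalence, and in your structure the acyclic cofibrations are the isomorphisms.) Note also that even with the correct classes, existence is not formal: the paper must factor $f:A\rto B$ as $A\rto B\times_{GF(B)}GF(A)\rto B$ and needs the first map to be in $W$, which is exactly conditions (a)/(b) of Proposition~\ref{prop:pullup}, verified for preorders in Corollary~\ref{D-preorder-adj} --- a point your sketch never touches.

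Your properness argument for case (3) is also incorrect: $R_\C:\C\rto P(\C)$ is \emph{not} a left adjoint --- you have conflated it with $P:\Cat\rto\Pos$, which is a left adjoint to $U$; $R_\C$ is a component of its unit --- and $W=R_\C^{-1}(\iso P(\C))$ is not closed under arbitrary pushouts. In the category of graphs, push out the weak equivalence $w:\{x,y\}\rto \{\ast\}$ (two isolated vertices collapsing to one) along the inclusion of $\{x,y\}$ as the endpoints of the path $P_4$ on vertices $x,a,b,y$: the pushout is the triangle $K_3$, and $P_4\rto K_3$ is not a weak equivalence since $K_3$ admits no morphism to the bipartite graph $P_4$. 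This is exactly why the paper's treatment has real content: Theorem~\ref{generalizedCoreModel} first identifies the acyclic fibrations with the retractions, constructs the two factorizations by hand as $A\rto A\sqcup B\rto B$ and $A\rto A\sqcup(A\times B)\rto B$, and then proves left properness (Proposition~\ref{core-proper}) only under the additional hypotheses of splitting and disjoint coproducts, which yield Proposition~\ref{cofibrationDecomposition} (every cofibration is a coproduct inclusion $i_1:A\rto A\sqcup X$), so that the pushout of $w$ along a cofibration is $w\sqcup 1_X$, which visibly admits a map backwards. Your proposal never invokes these hypotheses and defers the factorizations to ``exactly as in \cite{droz12}'', so existence in case (3) is not established either. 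Finally, in case (2) no construction from products and coproducts is needed: with the paper's classes every morphism is already either a cofibration or an acyclic fibration (and dually), so both factorizations are trivial; the only genuine work is the lifting axiom, which holds because there are no morphisms from objects of $\P$ to objects of $\I$ --- the one point in that case your sketch does not address.
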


A recurring example in this paper is the category of semi-simplicial sets.  This
has as objects functors $\Delta^{op}_{inj} \rto \Set$, where $\Delta_{inj}$ is
the category of nonempty ordered sets and injections between them.  Any
simplicial set is also a semi-simplicial set, and the geometric realization of a
semi-simplicial set is homotopy equivalent to the geometric realization of the
original simplicial set.  However, this category is not a model for the homotopy
theory of topological spaces, in the sense that it does not have a model
structure Quillen equivalent to the model structure on topological spaces.  In
this paper we will show that it does have several intruiguing model structures
on it, including several where the dimension of a semi-simplicial set is a
homotopy invariant.  For more details, see Examples~\ref{semi-simpl-dim},
\ref{semi-simplicial2} and~\ref{semi-simplicial}.

As another application of this theorem we consider the model structure
constructed in \cite{droz12} for the category of finite graphs.  This model
structure is interesting in that it gives a homotopy-theoretic expression of a
combinatorial invariant: two graphs are weakly equivalent if and only if they
have the same core.  The theorem allows us to construct an analogous structure
on the category of infinite graphs, and thus gives a possible generalization of
the notion of ``core'' to the context of infinite graphs.  The notion of core
for infinite graphs is not agreeed upon, although several candidates are
defined; our new notion of core does not agree with any of the existing
candidates for the notion of the core of an infinite graph.

The organization of this paper is as follows.  Section 2 discusses model
structures and some categorical preliminaries necessary for the paper.  Sections
3-5 discuss cases 1-3 in detail.  Finally, section 6 analyzes the implications
that the model structure from section 5 has for the notion of a core for
infinite graphs.

\subsection*{Notation and terminology}
We will say that a category is {\it bicomplete} if it has all finite limits
and colimits.  We only use finite limits and colimits instead of the usual
assumption of small limits and colimits as we do not need to use the techniques
of cofibrant generation for constructing our model structures.  Thus categories
such as the category of finite graphs can be given model structures in our
examples. 

A {\it preorder} is a category where for all objects $A$ and $B$, $|\Hom(A,B)|
\leq 1$.

We write $\initial$ for the initial object in a category and $*$ for the
terminal object.

%%%%%%%%%%%%%%%%%%%%%%%%%%%%%%%%%%%%%%%%%%%%%%%%%%%%%%%%%%%%%%%%%%%%%%%%%%%%%%%%%%%%%%%%%%%%%%%%%%%%%%%%%%%%%%%%%%%%%%%%%%%%%%%%%%%%%%5
\section{Preliminaries}

\subsection{Weak factorization systems}

\begin{definition}
  In a category $\mathcal{C}$, we say that the morphism $f:A\to B$ {\it has the
    left lifting property with respect to} the morphism $g:C\to D$ if for any
  commutative diagram of solid arrows
  $$\xymatrix{
    A \ar[r]\ar[d]_{f} & C \ar[d]^{g} \\
    B \ar[r] \ar@{-->}[ur]^{h} & D }$$ there is a morphism $h$ which makes the
  complete diagram commutative.  We will write $f\llp g$ if $f$ has the left
  lifting property with respect to $g$.

  For any class of morphisms $S$, we define 
  \begin{align*}
    S^\llp &= \{g\in \C\,|\, f\llp g\hbox{ for all } f\in S\}, \\
    {}^\llp S &= \{f\in \C \,|\, f\llp g \hbox{ for all }g\in S\}.
  \end{align*}
\end{definition}

Note that for any set $S$, the sets $S^\llp$ and $^\llp S$ are closed under
retracts.

\begin{definition}
  A {\it maximal lifting system} $(\LL,\RR)$ in a category $\mathcal{C}$ is a
  pair of classes of morphisms, such that $\LL = {}^\llp\RR$ and $\RR = \LL^\llp$.
\end{definition}

The following theorem is well-known; for a proof (and a more general statement),
see \cite[14.1.8]{mayponto}.

\begin{thm}[Folklore] \label{wfs} 
  If $(\LL,\RR)$ is a maximal lifting system in a category $\C$, $\LL$ and $\RR$ contain all
  isomorphisms and are closed under composition and retraction. Moreover, $\LL$
  is closed under coproducts and pushouts along morphisms in $\C$, and $\RR$ is closed 
  under products and pullbacks along morphisms in $\C$.
\end{thm}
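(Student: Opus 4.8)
The plan is to exploit the duality between the two halves of the statement: in the opposite category $\C^{op}$ the left lifting property becomes the right lifting property, pushouts become pullbacks, and coproducts become products, while $(\LL,\RR)$ becomes a maximal lifting system with the roles of the two classes reversed. Consequently every assertion about $\LL$ (closure under composition, coproducts, and pushouts) dualizes to the corresponding assertion about $\RR$ (closure under composition, products, and pullbacks), so I only need to prove the statements for $\LL$. Closure under retracts is already recorded in the remark following the first definition: since $\LL = {}^\llp\RR$ and $\RR = \LL^\llp$ are each of the form ${}^\llp S$ or $S^\llp$, both are automatically retract-closed. That every isomorphism lies in $\LL$ (and, dually, in $\RR$) is immediate, since an isomorphism $f$ has the left lifting property against \emph{every} morphism $g$: given a commuting square one sets $h$ to be the top arrow precomposed with $f^{-1}$ and checks the two triangles, so every isomorphism lies in ${}^\llp\RR = \LL$.

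For the remaining closure properties of $\LL = {}^\llp\RR$, the uniform technique is to take a lifting problem for the constructed morphism against an arbitrary $g\in\RR$, reduce it to lifting problems for the morphisms in $\LL$ we started from, solve those using the relation $f\llp g$, and reassemble the solution. For composition, suppose $f_1\colon A\to B$ and $f_2\colon B\to C$ lie in $\LL$ and we are given a square with $f_2 f_1$ on the left and $g$ on the right. I first lift against $f_1$, taking the bottom arrow of the $f_1$-square to be the original bottom arrow precomposed with $f_2$, obtaining $h_1\colon B\to X$; this $h_1$ together with the original bottom arrow forms a square against $f_2$, and lifting there produces $h_2\colon C\to X$, which one checks solves the original problem.

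The substantive cases are pushouts and coproducts. For a pushout square with $f\in\LL$ on one side and its pushout $f'$ opposite, given a lifting problem for $f'$ against $g\in\RR$, I precompose the top and bottom arrows with the two pushout legs to obtain a lifting problem for $f$; solving it yields a map $h$ which, paired with the original top arrow, forms a cocone under the pushout and hence induces a candidate lift $k$ out of the pushout object. The point requiring care is verifying the \emph{lower} triangle $gk = v$: one shows that $gk$ and $v$ restrict identically along both pushout legs and then invokes the \emph{uniqueness} clause of the pushout's universal property, rather than reading the identity off directly. The coproduct case is analogous but easier: a lifting problem against $\coprod f_i$ restricts along each coproduct inclusion to a lifting problem for $f_i$, each solved separately, and the universal property of the coproduct assembles the individual lifts into a single one.

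I expect no genuine obstacle here, as these are standard diagram chases; the only spot demanding attention is the pushout argument, where commutativity of the lower triangle must be extracted from the uniqueness part of the universal property rather than from existence alone. Since a full proof in greater generality is available in the cited reference, I would keep the exposition at the level of indicating these reductions and the role of duality rather than writing out every triangle.
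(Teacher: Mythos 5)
Your proof is correct: duality reduces everything to the claims about $\LL$, isomorphisms lift against all morphisms, and the composition, pushout, and coproduct cases are handled by exactly the right diagram chases (including the key point that the lower triangle in the pushout case follows from the uniqueness clause of the universal property). The paper gives no proof of its own, deferring to \cite[14.1.8]{mayponto}, and your argument is precisely the standard one that reference contains, so there is nothing to compare beyond noting the agreement.
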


We recall the definition of a weak factorization system.  For more on weak
factorization systems, see for example \cite{adamek02} or \cite[Section 11]{riehl14}.

\begin{definition}
  A {\it weak factorization system} $(\LL,\RR)$ in the category $\C$ is a
  maximal lifting system such that any morphism in $\C$ can be factored as $g
  \circ f$ with $f\in \LL$ and $g\in \RR$.
\end{definition} 

From this point on, we will write WFS for ``weak factorization system.''  The
following is a well-known result for recognizing WFSs; for a proof, see
\cite[14.1.13]{mayponto}.

\begin{lem}[Folklore] \label{seeWFS}
  If $(\LL,\RR)$ is a pair of classes of morphisms in a category $\C$ such that 
  \begin{enumerate}
  \item $f\llp g$ for all $f\in \LL$ and $g\in \RR$,
  \item all morphisms $f\in \C$ can be can be factored as $f_R\circ f_L$, where
    $f_R\in \RR$ and $f_L\in \LL$, and
  \item $\LL$ and $\RR$ are closed under retracts,
  \end{enumerate}
  then $(\LL,\RR)$ is a WFS.
\end{lem}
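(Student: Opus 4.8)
The plan is to show that $(\LL,\RR)$ is a maximal lifting system, i.e. that $\LL = {}^\llp\RR$ and $\RR = \LL^\llp$; combined with the factorizations supplied by hypothesis~(2), this is precisely the definition of a WFS. Hypothesis~(1) says exactly that $\LL \subseteq {}^\llp\RR$ and $\RR \subseteq \LL^\llp$, so the entire content of the lemma lies in the two reverse inclusions, and these are formally dual to one another.

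I would establish $\RR = \LL^\llp$ first. Given $g\colon C\to D$ in $\LL^\llp$, use~(2) to write $g = g_R\circ g_L$ with $g_L\colon C\to E$ in $\LL$ and $g_R\colon E\to D$ in $\RR$. The crux is the \emph{retract argument}: since $g_L\in\LL$ and $g\in\LL^\llp$ we have $g_L\llp g$, so the square whose left edge is $g_L$, right edge is $g$, top edge is $\mathrm{id}_C$ and bottom edge is $g_R$ (commuting because $g_R g_L = g$) admits a diagonal filler $h\colon E\to C$ with $h g_L = \mathrm{id}_C$ and $g h = g_R$. These two identities make $(g_L,\mathrm{id}_D)\colon g\to g_R$ and $(h,\mathrm{id}_D)\colon g_R\to g$ into a two-sided retraction in the arrow category of $\C$, so $g$ is a retract of $g_R\in\RR$, and hypothesis~(3) forces $g\in\RR$.

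The inclusion $\LL = {}^\llp\RR$ follows by the dual argument: for $f\colon A\to B$ in ${}^\llp\RR$, factor $f = f_R\circ f_L$ with $f_L\in\LL$ and $f_R\in\RR$; now $f\llp f_R$ yields a filler $h\colon B\to E$ with $h f = f_L$ and $f_R h = \mathrm{id}_B$, exhibiting $f$ as a retract of $f_L\in\LL$, whence $f\in\LL$ by~(3). Having both equalities, $(\LL,\RR)$ is a maximal lifting system, and~(2) upgrades it to a WFS.

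The only step that is not pure bookkeeping with the definitions is the retract argument, and even there the single real subtlety is orienting the lifting square correctly so that the diagonal filler assembles with the factorization into a genuine retract diagram (and not merely a one-sided section). Everything else is immediate from hypotheses~(1)--(3), so I do not anticipate a genuine obstacle, consistent with the ``folklore'' status of the statement.
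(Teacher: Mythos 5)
Your proof is correct and is the standard retract argument; the paper itself omits the proof and cites \cite[14.1.13]{mayponto}, where essentially this same argument is given. Both halves of your retract construction check out (the lifts $h$ do assemble with the factorizations into genuine two-sided retract diagrams in the arrow category), so hypothesis (3) applies exactly as you claim.
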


As an example of how lifting properties can classify properties of morphisms, we
present the following characterization of retractions and sections.

\begin{definition}
  A morphism $r:A\rightarrow B$ in a category is called a {\it retraction} if it
  is possible to factorize the identity of $B$ as $1_B=r s$ for some morphism
  $s$. Dually, a morphism $s:A\rightarrow B$ is called a {\it section} if it is
  possible to factorize the identity of $A$ as $1_A=r s$ for some morphism $r$.
\end{definition}

\begin{lem}
  \label{initialTerminalLifting}
  The class of retractions is exactly $\{\initial \rightarrow A\,|\, A\in
  \C\}^\llp$. Dually, the class of sections is exactly ${}^\llp\{A\rightarrow *
  \,|\, A\in \C\}$.
\end{lem}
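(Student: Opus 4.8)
The plan is to prove the characterization of retractions directly from the definition and then obtain the statement about sections by passing to the opposite category. The key preliminary observation is that when the left-hand map of a lifting square issues from the initial object $\initial$, the upper triangle commutes automatically, since any two morphisms out of $\initial$ coincide. Unwinding the notation, a morphism $g:C\to D$ lies in $\{\initial\to A\,|\,A\in\C\}^\llp$ exactly when every $\initial\to A$ has the left lifting property against $g$; that is, for every object $A$ and every morphism $\phi:A\to D$ there is a lift $h:A\to C$ with $gh=\phi$. So the membership condition says precisely that $g$ solves every lifting problem whose bottom edge is a map into its codomain $D$.

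For the inclusion that retractions lie in $\{\initial\to A\}^\llp$, I would take $g$ to be a retraction, so $1_D=gs$ for some $s:D\to C$. Given any lifting problem with right-hand map $g$ and bottom map $\phi:A\to D$, the composite $h=s\phi:A\to C$ solves it, because $gh=gs\phi=\phi$. For the reverse inclusion the essential trick is to specialize the test object: assuming $g\in\{\initial\to A\,|\,A\in\C\}^\llp$, I consider the single lifting square with $A=D$ and bottom edge the identity $1_D$. A lift $h:D\to C$ is then a morphism with $gh=1_D$, i.e.\ a section of $g$, so $g$ is a retraction. This establishes the first equality.

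The statement about sections is the formal dual and requires no new argument. A morphism $f:X\to Y$ lies in ${}^\llp\{A\to *\,|\,A\in\C\}$ precisely when every $\phi:X\to A$ extends along $f$ to some $h:Y\to A$ with $hf=\phi$, using this time that maps into $*$ are unique so the lower triangle commutes for free; the easy direction sends a retraction $r$ of $f$ to the lift $h=\phi r$, and the converse specializes to $A=X$ with top edge $1_X$ to produce a retraction of $f$. I would present this by working in $\C^{op}$, where sections, terminal objects, and the left lifting condition correspond respectively to retractions, initial objects, and the right lifting condition, so that the second equality is exactly the first applied to $\C^{op}$. There is no serious obstacle in any of this; the only point demanding care is recognizing that the right test object to feed into the lifting property is the (co)domain of the morphism itself, equipped with the identity map.
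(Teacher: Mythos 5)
Your proof is correct, and it is exactly the intended argument: the paper states this lemma without proof, treating it as an easy consequence of the definitions, and your reasoning (the square and upper triangle commute for free when the left leg starts at $\initial$, the retraction supplies all lifts via $h=s\phi$, and the converse follows by testing against $A=D$ with bottom edge $1_D$, plus the formal dualization for sections) is precisely the verification the authors leave to the reader.
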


%% \begin{proof}
%%   Let $f:A\rightarrow B$ be a morphism such that $(\initial \rightarrow B)\llp f$.
%%   We obtain a commutative square
%%   $$\xymatrix{
%%     \initial \ar[r]\ar[d] & A \ar[d]^{f} \\
%%     B \ar[r]_{1_B} \ar@{-->}[ur]^{h} & B }$$ in which $1_B=f h$ so that $f$ is a
%%   retraction.  

%%   On the other hand, if $r:A\rightarrow B$ is a retraction with section $s$,
%%   then for any commutative diagram
%%   $$\xymatrix{
%%     \initial \ar[r]\ar[d] & A \ar[d]^{r} \\
%%     C \ar[r]_{g}  & B }$$ we obtain a diagonal morphism $h=s g:C\rightarrow A$
%%   that makes the whole diagram commute. This means that any retraction lifts on
%%   the right of any morphism $\initial \rightarrow A$.  The rest of the lemma follows by
%%   duality.
%% \end{proof}

\subsection{Model categories}

We now recall the definition of a model structure on a category.  Instead of
using the most traditional approach \cite{hirschhorn, hovey99} we use an
equivalent axiomatization using WFSs.  For a more thorough treatment of model
categories along these lines, see for example \cite[Section 14.2]{mayponto} or
\cite[Section 11.2]{riehl14}.

\begin{definition}
  \label{shorterAxioms}
  A {\it model structure} $\CC$ on a bicomplete category $\C$ is a tuple of
  three subcategories of $\C$ called the {\it weak equivalences} ($\CC_{we}$),
  the {\it cofibrations} ($\CC_{cof}$) and the {\it fibrations}
  ($\CC_{fib}$). Those three sets should satisfy the following axioms.
  \begin{description}
  \item[WFS]  The pairs 
    \[(\CC_{cof},\CC_{fib}\cap \CC_{we}) \qquad (\CC_{cof}\cap
    \CC_{we},\CC_{fib})\] are WFSs.
  \item[2OF3] For composable morphisms $f$ and $g$, if two of the
      morphisms $f$, $g$ and $gf$ are weak equivalences, then so is the third.
  \end{description}
  We call a morphism which is both a cofibration (resp. fibration) and a weak
  equivalence an {\it acyclic cofibration} (resp. {\it acyclic fibration}).
\end{definition}

One nontrivial consequence of these axioms is that $\CC_{we}$ is closed under
retracts.  This result is due to Tierney, but we could not find it in his
writings; for a proof of this lemma, see \cite[14.2.5]{mayponto} or
\cite[11.2.3]{riehl14}.
%We present the proof (from
%\cite{ncatlab-model-categories}) here, as we could not find a published reference for it.

\begin{lem}[Tierney] 
	$\CC_{we}$ is closed under retracts.
\end{lem}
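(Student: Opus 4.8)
The plan is to set up a convenient recognition criterion for weak equivalences and then run a retract argument, reducing everything to the already-established fact that classes of the form $S^{\llp}$ are closed under retracts. First I would prove the following characterization: a map $w$ lies in $\CC_{we}$ if and only if it factors as $w = p\circ i$ with $i$ an acyclic cofibration and $p$ an acyclic fibration. The ``if'' direction is immediate from \textbf{2OF3}, since $i,p\in\CC_{we}$ forces $pi\in\CC_{we}$. For ``only if'', factor $w = p\circ i$ using the WFS $(\CC_{cof},\CC_{fib}\cap\CC_{we})$, so $i\in\CC_{cof}$ and $p\in\CC_{fib}\cap\CC_{we}$; since $p\in\CC_{we}$ and $w\in\CC_{we}$, \textbf{2OF3} forces $i\in\CC_{we}$, so $i$ is an acyclic cofibration.

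Next I would reduce the problem to recognizing an acyclic fibration. Let $g$ be a retract of $f\in\CC_{we}$. Using the WFS $(\CC_{cof}\cap\CC_{we},\CC_{fib})$, factor $g = p\circ i$ with $i$ an acyclic cofibration and $p$ a fibration; by \textbf{2OF3} it then suffices to show $p\in\CC_{we}$. Since $p$ is a fibration and the \textbf{WFS} axiom gives $\CC_{fib}\cap\CC_{we}=\CC_{cof}^{\llp}$, the condition $p\in\CC_{we}$ is equivalent to $p\in\CC_{cof}^{\llp}$, i.e.\ to $p$ being an acyclic fibration. As $\CC_{cof}^{\llp}$ is closed under retracts, it is therefore enough to exhibit $p$ as a retract of some acyclic fibration.

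To build such a retract I would use the characterization from the first step to factor $f = q\circ j$, with $j$ an acyclic cofibration, $q$ an acyclic fibration, and middle object $E$. Writing the retract of $g$ onto $f$ with structure maps $a,a',b,b'$ (so $a'a=1$, $b'b=1$, $fa=bg$, $ga'=b'f$), I want horizontal maps assembling $p$ into a retract of $q$ over the base retract $b,b'$. The top maps come from lifting: the square with left leg $i$, right leg $q$, top $ja$ and bottom $bp$ commutes and admits a lift $\beta\colon A'\to E$ because $i$ is an acyclic cofibration and $q$ a fibration; symmetrically the square with left leg $j$, right leg $p$, top $ia'$ and bottom $b'q$ admits a lift $\gamma\colon E\to A'$. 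These satisfy $q\beta=bp$ and $p\gamma=b'q$, so both squares of the candidate retract diagram commute.

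The hard part will be the top row: the lifts $\beta,\gamma$ chosen independently only satisfy $\gamma\beta\,i = i$, i.e.\ they compose to the identity after restriction along $i$, whereas a genuine retract of morphisms requires $\gamma\beta = 1_{A'}$ on the nose. Forcing this section identity is the real obstacle. Rather than choosing the two lifts separately, I would instead solve a single combined lifting problem for the acyclic cofibration $i$ against the map $E\to D\times_{B}A'$ assembled from $q$ and $\gamma$ (the bottom map $(bp,1_{A'})\colon A'\to D\times_B A'$ is well defined since $b'bp=p=p\cdot 1_{A'}$); a lift then simultaneously gives $q\beta=bp$ and $\gamma\beta=1_{A'}$. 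The technical crux, and the step I expect to demand the most care, is verifying that this assembled map is a fibration, so that $i$ lifts against it. Once $p$ is realized as an honest retract of the acyclic fibration $q$, the retract-closure of $\CC_{cof}^{\llp}$ yields $p\in\CC_{we}$, and hence $g\in\CC_{we}$.
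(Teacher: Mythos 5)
Your opening reduction is fine (the characterization of weak equivalences as acyclic-cofibration-followed-by-acyclic-fibration, and the reduction to showing the fibration $p$ lies in $\CC_{cof}^{\llp}$), and you have correctly located the crux --- but the crux step is not merely delicate, it is false, and the sub-strategy it serves cannot be repaired locally. You are trying to exhibit $p$ (fibration part of a factorization of $g$, with middle object your $A'$) as a retract of $q$ (acyclic-fibration part of a factorization of $f$, with middle object $E$). Any such retract diagram contains maps $A'\to E\to A'$ composing to $1_{A'}$, i.e.\ it forces $A'$ to be a retract of $E$. But these two middle objects come from independently chosen factorizations and are related by nothing stronger than a zigzag of weak equivalences, which does not give a retract. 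Concretely, in topological spaces with the Quillen model structure take $g=f=1_{*}$ (a retract of itself), factor $g$ as $*\hookrightarrow \mathbb{R}\xrightarrow{p} *$ (acyclic cofibration followed by a fibration) and factor $f$ as $1_{*}\circ 1_{*}$, so $E=*$ and $q=1_{*}$. A retract diagram exhibiting $p$ as a retract of $q$ would exhibit $\mathbb{R}$ as a retract of a point, which is absurd. In this example your assembled map $(q,\gamma)\colon E\to B\times_{D}A'$ (note the pullback you want is $B\times_{D}A'$, of $b'$ and $p$, not $D\times_{B}A'$) is the inclusion of a point into $\mathbb{R}$, which is not a fibration, and the combined lifting problem has no solution for any choice of $\gamma$: the failure is not about choosing the lifts compatibly, since the required map $\beta$ with $\gamma\beta=1_{A'}$ does not exist at all. (Matched choices of the two factorizations would work in this example, but the proof as written allows arbitrary ones, and proving that compatible choices always exist is essentially the content of the correct argument below.)

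The proof the paper defers to (May--Ponto 14.2.5, Riehl 11.2.3) never compares the two middle objects, and this is the idea you are missing. First one treats the case where $g$ itself is a fibration: factor $f=qj$ with $j$ an acyclic cofibration and $q$ an acyclic fibration, and take a single lift $t$ in the square with top $a'$, left $j$, right $g$, bottom $b'q$ (it commutes since $ga'=b'f=b'qj$). Then $g$ is a retract of $q$ via the top row $ja$ and $t$: the identity $t(ja)=a'a=1$ is automatic because only one horizontal map is a lift, the other being assembled from the given retract datum $a$; hence $g\in\CC_{cof}^{\llp}\subseteq\CC_{we}$. For general $g$, factor $g=pi$ as you do, but then push out $i$ along $a$ to form $P$; the cobase change $A\to P$ is an acyclic cofibration, the induced map $h\colon P\to B$ (from $f$ and $bp$, which agree on the domain of $g$) satisfies $h\circ(A\to P)=f$ and so is a weak equivalence by 2OF3, and the universal property of the pushout applied to $1_{A'}$ and $ia'$ produces a retraction $P\to A'$ exhibiting $p$ as a retract of $h$ --- here the identity on $A'$ comes for free from the universal property, not from a lift. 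The fibration case applied to the pair $(p,h)$ then gives $p\in\CC_{we}$, and 2OF3 finishes. In short: the pushout manufactures the structure maps that your middle object lacks, which is exactly what your two independent lifts were unable to do.
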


The following two lemmas will be used below to construct model structures.  We
omit the proofs, as they are simple definition checks.

\begin{lem} \label{constructModel} Given a bicomplete category $\C$, together
  with subcategories $\tilde f\C \subseteq w\C \subseteq \C$, where $\tilde f\C$
  is closed under pullbacks and $w\C$ satisfies (2OF3), we define
  \begin{align*}
   \CC_{we} = w\C  \qquad
   \CC_{cof} = {}^\llp \tilde f\C \qquad 
   \CC_{fib} = (\CC_{cof}\cap \CC_{we})^\llp.
  \end{align*}
  If $(\CC_{cof},\tilde f\C)$ and $(\CC_{cof}\cap \CC_{we},\CC_{fib})$ are WFSs and $\CC_{we}\cap \CC_{fib} = \tilde f\C$, then
  $(\CC_{we}, \CC_{cof}, \CC_{fib})$ is a model structure on $\C$.
\end{lem}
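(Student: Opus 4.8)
The plan is to verify directly the two axioms of Definition~\ref{shorterAxioms}, since both weak factorization systems and the two-of-three property are essentially handed to us by the hypotheses. First I would dispatch the \textbf{2OF3} axiom: this is immediate, because $\CC_{we} = w\C$ and $w\C$ is assumed to satisfy two-of-three.

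The \textbf{WFS} axiom requires that the pairs $(\CC_{cof}, \CC_{fib}\cap\CC_{we})$ and $(\CC_{cof}\cap\CC_{we}, \CC_{fib})$ both be WFSs. The second of these is assumed outright, so nothing needs to be done for it. For the first, the key observation is that $\CC_{fib}\cap\CC_{we} = \CC_{we}\cap\CC_{fib} = \tilde f\C$ by the assumed identity $\CC_{we}\cap\CC_{fib} = \tilde f\C$. Hence the pair $(\CC_{cof}, \CC_{fib}\cap\CC_{we})$ coincides with $(\CC_{cof}, \tilde f\C)$, which is a WFS by hypothesis. This matching is the entire content of the argument.

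It remains to confirm that $\CC_{we}$, $\CC_{cof}$, and $\CC_{fib}$ are genuinely subcategories, as Definition~\ref{shorterAxioms} demands. The class $\CC_{we} = w\C$ is a subcategory by assumption. For the other two, since $(\CC_{cof}, \tilde f\C)$ and $(\CC_{cof}\cap\CC_{we}, \CC_{fib})$ are WFSs, each is in particular a maximal lifting system, so Theorem~\ref{wfs} guarantees that $\CC_{cof}$ and $\CC_{fib}$ contain all isomorphisms and are closed under composition; thus both are subcategories.

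I expect there to be no real obstacle here, which is precisely why the authors omit the proof as a definition check. The one point worth emphasizing is the role of the assumed equality $\CC_{we}\cap\CC_{fib} = \tilde f\C$: it is exactly what converts the first required WFS pair into the given pair $(\CC_{cof}, \tilde f\C)$, and without it the two halves of the construction would not interlock. I would note in passing that the pullback-closure assumption on $\tilde f\C$ is not actually invoked in proving this implication as stated; it is presumably relevant only for establishing the WFS hypotheses in the applications that feed into this lemma.
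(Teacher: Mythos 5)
Your proof is correct and is exactly the ``simple definition check'' the paper alludes to when omitting the proof: the assumed equality $\CC_{we}\cap\CC_{fib}=\tilde f\C$ identifies $(\CC_{cof},\CC_{fib}\cap\CC_{we})$ with the hypothesized WFS $(\CC_{cof},\tilde f\C)$, the other WFS and (2OF3) are assumed outright, and Theorem~\ref{wfs} handles the subcategory requirement. Your closing observation is also accurate: pullback-closure of $\tilde f\C$ is not needed for this implication (indeed it already follows from $(\CC_{cof},\tilde f\C)$ being a WFS, by Theorem~\ref{wfs}).
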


\begin{lem} \label{allcofibs} Given a bicomplete category $\C$ together
  with a subcategory $w\C$ which satisfies (2OF3), we define
  \[\CC_{we} = w\C \qquad \CC_{cof} = \C \qquad \CC_{fib} = \CC_{we}^\llp.\]
  If $(\CC_{we}, \CC_{fib})$ is a WFS then $\CC$ is a
  model structure on $\C$.
\end{lem}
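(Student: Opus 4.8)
The plan is to check the two clauses of Definition~\ref{shorterAxioms} in turn. The axiom (2OF3) is immediate, since $\CC_{we}=w\C$ is assumed to satisfy two-of-three. For the WFS axiom I would first dispose of the second required factorization system cheaply: because $w\C\subseteq\C$ we have $\CC_{cof}\cap\CC_{we}=\C\cap w\C=w\C$, so the pair $(\CC_{cof}\cap\CC_{we},\CC_{fib})$ is literally $(w\C,(w\C)^\llp)=(\CC_{we},\CC_{fib})$, which is a WFS by hypothesis. Everything then comes down to showing that $(\CC_{cof},\CC_{fib}\cap\CC_{we})=(\C,(w\C)^\llp\cap w\C)$ is a WFS.

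The key step, and the one carrying all the content, is to identify the acyclic fibrations. I would show $\CC_{fib}\cap\CC_{we}=(w\C)^\llp\cap w\C=\iso\C$. Write $(\LL,\RR)=(w\C,(w\C)^\llp)$ for the given WFS, so that this intersection is exactly $\LL\cap\RR$. By Theorem~\ref{wfs} both $\LL$ and $\RR$ contain every isomorphism, giving $\iso\C\subseteq\LL\cap\RR$. For the reverse inclusion, if $g\in\LL\cap\RR$ then $g\llp g$, and lifting in the square whose two horizontal arrows are identities yields a two-sided inverse of $g$; hence $g\in\iso\C$. This collapses the first pair to the trivial factorization system $(\C,\iso\C)$.

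It then remains to confirm that $(\C,\iso\C)$ is a WFS, for which I would invoke Lemma~\ref{seeWFS}. The lifting hypothesis holds because every morphism lifts against every isomorphism: in a square with an isomorphism $g$ on the right, composing $g^{-1}$ with the bottom arrow produces the required diagonal. The factorization hypothesis is trivial, via $f=1\circ f$ with $f\in\C$ and $1\in\iso\C$. And the retract-closure hypothesis is clear, since $\C$ contains all morphisms and a retract of an isomorphism is again an isomorphism. I do not expect a genuine obstacle; the one place demanding care is the identification $\CC_{fib}\cap\CC_{we}=\iso\C$, since the whole argument hinges on the acyclic fibrations collapsing to the isomorphisms once every morphism is declared a cofibration.
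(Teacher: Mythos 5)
Your proof is correct. The paper in fact omits the proof of this lemma entirely, describing it as a ``simple definition check''; your argument---noting that $(\CC_{cof}\cap\CC_{we},\CC_{fib})=(\CC_{we},\CC_{fib})$ is the given WFS, reducing the remaining pair to the identification $\CC_{fib}\cap\CC_{we}=\iso\C$ via the retract-of-identity lifting trick, and then verifying $(\C,\iso\C)$ is a WFS by Lemma~\ref{seeWFS}---is precisely that check, carried out correctly and with the emphasis in the right place.
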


We conclude the discussion of model categories by recalling the definition of a
proper model category.

\begin{definition}
  A model structure is {\it left (resp. right) proper} if the pushout
  (resp. pullback) of a weak equivalence along a cofibration (resp. fibration)
  is always a weak equivalence.
\end{definition}

\subsection{Splitting and disjoint coproducts}

Our last topic in this section is splitting and disjoint coproducts.  We
present several examples, as these notions interact in nontrivial ways.  We
write $A \sqcup B$ for the coproduct of $A$ and $B$.

\begin{definition}
  A category is said to have {\it splitting coproducts} if for any morphism
  $f:X\rightarrow A\sqcup B$ there exist objects $X_L$ and $X_R$ such that $X
  \cong X_L \sqcup X_R$, and morphisms $f_L: X_L \rto A$ and $f_R: X_R \rto B$
  such that $f \cong f_L \sqcup f_R$.  (Although $X_L$ and $X_R$ depend on $f$,
  we do omit it from the notation.)
\end{definition}

%\begin{definition}
%A category is said to have {\it segregating coproducts} if for any coproduct $A\sqcup B$ and any morphisms $f:C\rightarrow A$ and $g:C\rightarrow B$, if $i_a\circ f=i_b\circ g$ then $C$ is an initial object. 
%\end{definition}

\begin{definition}
  A category is said to have {\it disjoint coproducts} if, for any coproduct
  $A\sqcup B$, the natural injections $i_1:A \rto A\sqcup B$ and $i_2:B \rto A
  \sqcup B$ are monic, and the following three squares are pullback squares:
  \[\xymatrix{A \ar[r]^{1_A} \ar[d]_{1_A} & A \ar[d]^{i_1} \\ A \ar[r]^-{i_1} &
    A\sqcup B} \qquad
  \xymatrix{\initial \ar[r] \ar[d] & B \ar[d]^{i_2} \\ A \ar[r]^-{i_1} &
    A \sqcup B} \qquad
  \xymatrix{B \ar[r]^{1_B} \ar[d]_{1_B} & B \ar[d]^{i_2} \\ B \ar[r]^-{i_2} &
    A \sqcup B} 
  \]
\end{definition}

\begin{ex} We present examples of how these definitions interact.
\begin{enumerate}
\item The categories of sets and graphs both have splitting coproducts and disjoint coproducts.
\item The category of vector spaces and linear maps over $\mathbb{R}$ has
  disjoint coproducts but not splitting coproducts. 
\item The category of pointed finite sets has both disjoint and
  splitting coproducts.  
\item The lattice
  \[\xymatrix@R=1.5em{& & B \ar[rd] \\ \initial \ar[r] & X \ar[ru] \ar[rd] & & {*} \\ &
    & A \ar[ru]}\]
  has splitting but not disjoint coproducts, as $* = A \sqcup B$ but the
  pullback of the two inclusions is $X$.
\item The lattice
  \[\xymatrix{& & B \ar[rd] \\ \initial \ar[r] & X \ar[ru] \ar[r] \ar[rd] & C
    \ar[r] & {*} \\
    & & A \ar[ru] 
  }\]
  has neither disjoint nor splitting coproducts.  Just as in (4) it does not
  have disjoint coproducts, and it does not have splitting coproducts because $C
  \rto * = A \sqcup B$ cannot be written as $(Y \rto A) \sqcup (Z \rto B)$ for
  any $Y$ or $Z$ in the lattice.
\end{enumerate}
\end{ex}

\begin{lem} \label{lem:no-trans} Suppose that $\C$ has disjoint coproducts, and
  suppose that there exists a morphism $f:A \rightarrow B$ such that the diagram
  \[\xymatrix{& B\ar[d]^{i_2} \\ A \ar[ru]^f \ar[r]^{i_1} & A\sqcup B}\]
  commutes.  Then $A \cong \initial$.
\end{lem}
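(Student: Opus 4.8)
The plan is to exploit the middle square in the definition of disjoint coproducts, which asserts that the pullback of $i_1:A\to A\sqcup B$ along $i_2:B\to A\sqcup B$ is the initial object $\initial$. The hypothesis is precisely the equation $i_2\circ f=i_1$, and the key observation is to read this as saying that $A$, equipped with the pair of maps $1_A:A\to A$ and $f:A\to B$, forms a cone over the cospan $A\xrightarrow{i_1}A\sqcup B\xleftarrow{i_2}B$.

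First I would check the cone condition explicitly: the two composites into $A\sqcup B$ are $i_1\circ 1_A=i_1$ and $i_2\circ f$, and these agree exactly by the commutativity assumption. Invoking the universal property of the pullback, and using disjointness to identify the pullback object with $\initial$, this yields a unique morphism $u:A\to\initial$ satisfying $p_1\circ u=1_A$, where $p_1:\initial\to A$ is the projection corresponding to the $A$-leg of the cospan (and incidentally $p_2\circ u=f$, though this will not be needed).

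At this stage $A$ has been exhibited as a retract of $\initial$, and the remaining task is to promote this retraction to an isomorphism. The other composite $u\circ p_1$ is an endomorphism of $\initial$; since $\initial$ is initial, $\Hom(\initial,\initial)$ is a singleton, so $u\circ p_1=1_\initial$. Combined with $p_1\circ u=1_A$, this shows that $u$ and $p_1$ are mutually inverse, whence $A\cong\initial$.

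The step I would watch most carefully is the last one, since it is where the argument could quietly go wrong: in a general category a mere morphism $A\to\initial$ does \emph{not} force $A\cong\initial$. The point is that the pullback produces not just such a morphism but a genuine retraction, and it is strictness of the initial object — packaged here as the singleton hom-set $\Hom(\initial,\initial)$ — that turns the retract into an isomorphism. Apart from correctly bookkeeping which projection plays the role of the section, the proof requires no further hypotheses and involves no real computation.
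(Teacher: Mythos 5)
Your proof is correct, and it runs on the same engine as the paper's: the disjointness pullback square exhibiting $\initial$ as the pullback of $i_1$ and $i_2$, applied to the cone $(1_A,f)$, followed by the observation that a retract of an initial object is initial. The difference lies in the execution. The paper builds a two-story diagram, asserts that both the square induced by $(1_A,f)$ and the disjointness square are pullbacks, pastes them, and then compares the composite rectangle against the \emph{other} disjointness square (the one saying that the pullback of $i_1$ along $i_1$ is $A$ with identity legs) to conclude that the composite $A\to\initial\to A$ is the identity. You instead read the retraction $p_1\circ u=1_A$ directly off the universal property of the single pullback square. This buys you three things: you use only one of the three squares in the definition of disjoint coproducts, you avoid the pasting lemma entirely, and you sidestep having to justify that the upper square in the paper's diagram is a pullback --- a claim that is not automatic and is the one genuinely informal point in the paper's argument. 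Your closing step, that $u\circ p_1=1_\initial$ because $\Hom(\initial,\initial)$ is a singleton, is exactly the (implicit) final step of the paper's proof, and your caution about it is well placed: it is precisely initiality, not the mere existence of a map $A\to\initial$, that upgrades the retraction to an isomorphism.
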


\begin{proof}
  We have the following diagram, where both squares are pullback squares:
  \[\xymatrix{A \ar[r]^{1_A} \ar[d] & A \ar[d]^f \ar@/^{5ex}/[dd]^{i_1}\\
    \initial \ar[r] \ar[d] & B \ar[d]^{i_2} \\ A \ar[r]^-{i_1} & A\sqcup B}
  \]
  If we consider the composite pullback, we need to take the pullback of $i_1$
  along $i_1$, which (as $\C$ has disjoint coproducts) is $A$ with the identity
  morphism.  Thus the composite down the left must be the identity on $A$, and
  $A\cong \initial$, as desired.
\end{proof}

\section{The case of a functor with an adjoint section}

In many cases where a model category is required, the subcategory of weak
equivalences is given as the preimage of the isomorphisms under a functor.  In
this section we explore the question of how much extra structure on the functor
is required to show that the model structure exists directly from the existence
of the functor.

Let $\C$ be a bicomplete category and suppose that $F:\C\rightarrow \D$ is a
functor with a right adjoint $G:\D\rightarrow \C$ such that the counit
$\epsilon:FG \rto 1_\D$ is a natural isomorphism; in this case, we say that $G$
is a {\it section} of $F$.  We would like to define a model structure on $\C$
such that $\D = \mathrm{Ho}\,\C$ and $F$ is the localization functor.

\begin{prop} \label{prop:pullup} 
  Suppose that $F: \C \rto \D$ is a functor with a section $G: \D \rto \C$.
  We define three subcategories of $\C$ by 
  \[\CC^{adj}_{we} = F^{-1}(\iso \D) \qquad \CC^{adj}_{cof} = \C \qquad \CC^{adj}_{fib} = (\CC^{adj}_{we})^\llp.\]
  Suppose that either
  \begin{itemize}
  \item[(a)] $\CC^{adj}_{we}$ is closed under pullbacks along $G(\D)$, or
  \item[(b)] for any $f:A\rightarrow B$, $A \rightarrow B\times_{GF(B)} GF(A)$
    is in $\CC^{adj}_{we}$.
  \end{itemize}
  Then these three subcategories form a left proper model structure.  If $\D$ is
  bicomplete we can consider $\D$ to be a model category where the weak
  equivalences are the isomorphisms and all morphisms are both cofibrations and
  fibrations.  In this case, $(F,G)$ is a Quillen equivalence.
\end{prop}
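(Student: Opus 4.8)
The plan is to build the model structure via Lemma~\ref{allcofibs}, which applies because every morphism is a cofibration. The hypothesis that $\CC^{adj}_{we} = F^{-1}(\iso\D)$ satisfies (2OF3) is immediate: applying $F$ reduces it to the fact that isomorphisms in $\D$ satisfy two-of-three. It remains to verify that $(\CC^{adj}_{we}, \CC^{adj}_{fib})$ is a WFS, which I would check using Lemma~\ref{seeWFS}. The lifting condition holds by definition since $\CC^{adj}_{fib} = (\CC^{adj}_{we})^\llp$, and both classes are closed under retracts (weak equivalences because isomorphisms are and $F$ preserves retract diagrams, and $\CC^{adj}_{fib}$ because any class of the form $S^\llp$ is). The real content is the factorization axiom.

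To factor an arbitrary $f: A \to B$, I would use naturality of the unit $\eta$ to form the pullback $P = B \times_{GF(B)} GF(A)$ and factor $f$ as $A \xrightarrow{i} P \xrightarrow{p} B$, where $p$ is the projection and $i$ is induced by $f$ and $\eta_A$. The key preliminary observation is that every morphism of the form $G(d)$ for $d$ in $\D$ is a fibration: transposing a lifting problem against a weak equivalence $w$ across the adjunction turns it into a lifting problem in $\D$ against the isomorphism $F(w)$, which always has a unique solution. In particular $GF(f) = G(F(f))$ is a fibration, and since $p$ is the pullback of $GF(f)$ along $\eta_B$, Theorem~\ref{wfs} shows $p$ is a fibration.

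It remains to see that $i$ is a weak equivalence, and this is where hypotheses (a) and (b) enter. Under (b) this is the assumption verbatim. Under (a), I would observe that the other projection $q: P \to GF(A)$ is the pullback of the unit $\eta_B$ along $GF(f) \in G(\D)$; since $\eta_B$ is a weak equivalence (by the triangle identity $\epsilon_{F(B)} \circ F(\eta_B) = 1$ and the invertibility of $\epsilon$), hypothesis (a) makes $q$ a weak equivalence. As $q \circ i = \eta_A$ is also a weak equivalence, two-of-three forces $i$ to be one. This completes the factorization and hence the model structure. For left properness, since $F$ is a left adjoint it preserves the relevant pushout, and the pushout of an isomorphism along any map is again an isomorphism; applying $F$ to a pushout of a weak equivalence thus yields an isomorphism, so the pushout is a weak equivalence, and this covers pushouts along arbitrary cofibrations since all morphisms are cofibrations.

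Finally, for the Quillen equivalence I would equip $\D$ with its trivial model structure (weak equivalences the isomorphisms, all morphisms both cofibrations and fibrations; this is a model structure by the same two lemmas). Since all objects of $\C$ are cofibrant and all objects of $\D$ fibrant, and both categories have all morphisms as cofibrations, $F$ trivially preserves cofibrations and acyclic cofibrations, so $(F,G)$ is a Quillen adjunction. To upgrade this to a Quillen equivalence I would show directly that for any $\phi: F(X) \to Y$ with transpose $\tilde\phi = G(\phi)\circ \eta_X$, the triangle identities give $\phi = \epsilon_Y \circ F(\tilde\phi)$; as $\epsilon_Y$ is an isomorphism, $\phi$ is an isomorphism if and only if $F(\tilde\phi)$ is, that is, $\phi$ is a weak equivalence exactly when $\tilde\phi$ is. I expect the factorization step, and in particular the uniform treatment of hypotheses (a) and (b), to be the main obstacle; the Quillen equivalence is then a formal consequence of the counit being invertible.
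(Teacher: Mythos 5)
Your proposal is correct and follows essentially the same route as the paper's proof: Lemma~\ref{allcofibs} plus Lemma~\ref{seeWFS}, the factorization of $f$ through $B\times_{GF(B)}GF(A)$ using that morphisms in the image of $G$ are fibrations, the same use of (a) via pullback of $\eta_B$ along $GF(f)$ and two-of-three, and the same left-properness argument via $F$ preserving pushouts. Your Quillen-equivalence step, phrased via the identity $\phi=\epsilon_Y\circ F(\tilde\phi)$, is a slightly more direct rendering of the paper's argument but is not a substantively different proof.
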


\begin{proof}
  First, notice that the image of $G$ is inside $\CC^{adj}_{fib}$.  A
  commutative square
  \[\xymatrix{A \ar[r] \ar[d]_f^\sim & G(X) \ar[d]^{G(p)} \\ B \ar[r] & G(Y) }\]
  has a lift because $F(f)$ is an isomorphism in the adjoint square.  Thus a lift
  exists in the original square and $G(p) \in (\CC^{adj}_{we})^\llp = \CC^{adj}_{fib}$.

  Now we check that we have a model structure on $\C$.  As $\CC^{adj}_{we}$
  clearly satisfies (2OF3), by Lemma~\ref{allcofibs} we only need to check that
  $(\CC^{adj}_{we}, \CC^{adj}_{fib})$ is a WFS.  We use
  Lemma~\ref{seeWFS}.  From the definitions we know that $\CC^{adj}_{we}$ and
  $\CC^{adj}_{fib}$ are closed under retracts, so conditition (3) holds.  As
  $\CC^{adj}_{fib} = (\CC^{adj}_{we})^\llp$, condition (1) holds as well.

  It remains to check condition (2): any morphism can be factored as a weak
  equivalence followed by a fibration.  Let $f:A\rightarrow B$ be any morphism,
  and consider the diagram
  \[\xymatrix{A \ar@/^3ex/[rrd]^{\eta_A} \ar@/_3ex/[rdd]_f \ar@{-->}[rd]^i\\
    & B\times_{GF(B)} GF(A) \ar[r] \fib[d] & GF(A) \fib[d]^{GF(f)}\\
    & B \ar[r]^{\eta_B} & GF(B)}\]
  where $\eta: 1_\C \rto GF$ is the unit of the adjunction.  We need to show
  that $i$ is a weak equivalence.  If (b) holds then this is true by assumption.
  On the other hand, since $\epsilon$ is a natural isomorphism we know that as
  $F(\eta_X)$ is an isomorphism for all $X$, $\eta_A$ and $\eta_B$ are weak
  equivalences.  If (a) holds then $B\times_{GF(B)}GF(A)\rightarrow GF(A)$ is
  also a weak equivalence, and by (2OF3) $i$ is, as well.  In either case we
  have a factorization, as desired.  Thus $(\CC^{adj}_{we}, \CC^{adj}_{fib})$ is
  a WFS, as desired.

  It remains to check that $\CC^{adj}$ is left proper.  Consider any diagram
  \[\xymatrix{C & A \ar[l]_g \cofib[r]^i & B}\]
  where $g$ is a weak equivalence.
  Since $F$ is a left adjoint, $F(B\rightarrow B\cup_A C) = F(B) \rightarrow
  F(B)\cup_{F(A)} F(C)$, which is an isomorphism because $F(g)$ is an
  isomorphism.  Thus the pushout of $g$ along $i$ is a weak equivalence, as
  desired.

  We need to check that if $\D$ is bicomplete then $(F,G)$ is, indeed, a Quillen
  equivalence.  Clearly $F$ preserves cofibrations and acyclic cofibrations, so
  we have a Quillen pair.  It remains to show that $FA \rightarrow X$ is an
  isomorphism if and only if $A \rightarrow GX$ is a weak equivalence.  But $FA
  \rightarrow X$ is an isomorphism if and only if $GFA \rightarrow GX$ is an
  isomorphism (as $FG \simeq 1_\D$), which is an isomorphism if and only if $A
  \stackrel{\sim}{\rightarrow} GFA \rightarrow GX$ is a weak equivalence, as
  desired.
\end{proof}

  % Let $f\colon A \rightarrow B\in \C$.  Factor $Ff\colon FA \rightarrow FB$ as
  % $FA \stackrel{i}{\hookrightarrow} X \stackrel{p}{\twoheadrightarrow} B$, where
  % one of $i$ and $p$ is a weak equivalence.  Applying $G$ to this, we get the
  % following diagram:
  % \[\xymatrix{
  %   GFA \cofib[r]^i & GX \fib[r]^-p  & GFB \\
  %   A \cofib[u]^\sim \ar@{-->}[r] \ar@/_3ex/[rr]_f & GX\times_{GFB} B
  %   \ar[u] \fib[r]^-{p'} & B \cofib[u]^\sim
  % }\]
  % Note that $Gi$ is an (acyclic) cofibration because $FGi = i$ is, and $Gp$ is
  % an (acyclic) fibration because $G$ preserves (acyclic) fibrations.  $p'$, as
  % the pullback of $Gp$ must therefore also be an (acyclic) fibration. 

% The morphism $A \rightarrow GX\times_{GFB}B$ exists because the outside
% rectangle commutes, and it remains to show only that it is an (acyclic)
% cofibration.  Applying $F$ to the entire diagram, we see that all vertical
% morphisms become identity morphisms, and thus $F(A \rightarrow GX\times_{GFB} B)
% = i$, and is therefore an (acyclic) cofibration, as desired.

%%%%%%%%%%%%%%%%%%%%%%%%%%%%%%%%%%%%%%%%%%%%%%%%%%%%%%%%%%%%%%%%%%%%%%%%%%%%%%%%%%%%%%%%%%%%%%%%%%%%%%%%%%%%%%%%%%%%%%%%%%%%%%%%%%%%%%

\begin{remark}
  One may ask whether the model structure constructed in
  Proposition~\ref{prop:pullup} is right proper.  Unfortunately, we could not
  resolve that question.  The model structures constructed in
  Examples~\ref{top-conn-comp} and \ref{semi-simpl-dim} below are right proper, but we
  could not find a proof that this is generally the case.
\end{remark}

Conditions (a) and (b) are a bit annoying, as we do not have a conceptual
explanation of why they are necessary; they are assumed simply because they are
needed in the proof.  Morally speaking, they should correspond to the fact that
$\D$ is a much simpler category than $\C$, and thus that we don't need any more
information about the problem than just the structure of $\D$.  For very simple
$\D$ this is the case:
\begin{corollary} \label{D-preorder-adj}
  If $\D$ is a preorder then condition (b) always holds.  Thus for any functor
  $F:\C \rto \D$ with a section we have a model structure $\CC^{adj}$ with
  $\CC^{adj}_{we} = F^{-1}(\iso \D)$.
\end{corollary}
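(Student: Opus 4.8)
The plan is to invoke Proposition~\ref{prop:pullup} and to verify that its condition (b) holds automatically once $\D$ is a preorder. Recall that condition (b) requires, for every morphism $f:A\rto B$, that the canonical map $i:A\rto B\times_{GF(B)}GF(A)$ lie in $\CC^{adj}_{we}=F^{-1}(\iso\D)$; equivalently, that $F(i)$ be an isomorphism in $\D$.

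The key observation is that in a preorder every hom-set has at most one element, so a morphism $\phi:X\rto Y$ is already an isomorphism as soon as there exists \emph{any} morphism $Y\rto X$: the two composites are forced to coincide with the unique endomorphisms $1_X$ and $1_Y$. Thus, writing $P=B\times_{GF(B)}GF(A)$, I only need to exhibit a single morphism $F(P)\rto F(A)$ in $\D$ in order to conclude that $F(i):F(A)\rto F(P)$ is invertible.

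Such a reverse morphism is supplied by the pullback projection together with the counit. Let $p:P\rto GF(A)$ denote the projection of $P$ onto its $GF(A)$-factor. Applying $F$ gives $F(p):F(P)\rto FGF(A)$, and composing with the counit isomorphism $\epsilon_{FA}:FGF(A)\rto FA$ yields a morphism $\epsilon_{FA}\circ F(p):F(P)\rto FA$. By the preceding paragraph this forces $F(i)$ to be an isomorphism, so condition (b) holds for every $f$. Proposition~\ref{prop:pullup} then produces the desired left proper model structure $\CC^{adj}$ with $\CC^{adj}_{we}=F^{-1}(\iso\D)$.

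I expect essentially no obstacle here: the entire content is the remark that isomorphism in a preorder reduces to the bare existence of a morphism in the reverse direction. The only point requiring any care is identifying the correct reverse morphism, and this is handed to us for free by the hypothesis that $\epsilon$ is a natural isomorphism, which makes $\epsilon_{FA}$ available to convert the projection $F(p)$ into a map landing in $FA$.
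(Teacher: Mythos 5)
Your proof is correct and follows essentially the same route as the paper: both invoke Proposition~\ref{prop:pullup} via condition (b), use the pullback projection onto the $GF(A)$ factor to produce a morphism of $\D$ in the reverse direction, and conclude from the preorder property that $F(i)$ must be an isomorphism. The only cosmetic difference is that the paper verifies the equation $F(\pi_2)F(i)=F(\eta_A)=1_{F(A)}$, whereas you merely note the existence of the reverse morphism $\epsilon_{FA}\circ F(p)$, which in a preorder suffices just as well.
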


\begin{proof}
  We need to show that $F(i:A \rto B\times_{GF(B)} GF(A)) \in \iso \D$ if $\D$ is
  a preorder.  Let $\pi_2: B \times_{GF(B)}GF(A) \rto GF(A)$ be the projection morphism;
  then we know that $\pi_2 i = \eta_A$.  Since $FG = 1_\D$, in particular we
  know that $F(\eta_A) = 1_{F(A)}$; thus $F(\pi_2 i) = F(\pi_2) F(i)=
  1_{F(A)}$.  Since $\D$ is a preorder, this means that $F(i)$ is an
  isomorphism, as desired.
\end{proof}

We present a couple of examples of model structures constructed using this
theorem. 

\begin{example} \label{top-conn-comp} Let $\pi_0:\mathbf{Top} \rto \Set$ be the
  functor which takes a topological space to the set of its connected
  components.  This functor has a right adjoint $-^\delta$ which endows a set
  with the discrete topology.  To check that a model structure exists with weak
  equivalences equal to $\pi_0^{-1}(\iso \Set)$ (the morphisms which induce
  bijections between connected components) we will show that condition (b)
  holds.  Let $f:A \rto B$ be a continuous map of spaces.  Write $A =
  \coprod_{i\in I} A_i$ and $B = \coprod_{j\in J} B_j$, with $A_i$ and $B_j$
  connected; by an abuse of notation, write $f:I \rto J$ for the induced map on
  connected components.  Thus $\pi_0(A)^\delta = I$ and $\pi_0(B)^\delta = J$, and
  \[B\times_{\pi_0(B)^\delta} \pi_0(A)^\delta \cong \coprod_{i\in I} B_{f(i)}\]
  with the map $A \rto B\times_{GF(B)} GF(A)$ sending $A_i$ to $B_{f(i)}$ by
  $f$.  This induces a bijection on connected components, so it is a weak
  equivalence, as desired.  
\end{example}

\begin{example} \label{semi-simpl-dim} Let $s_{inj}\Set$ be the category of {\it
    semi-simplicial sets} as defined in the introduction.  For $X\in
  s_{inj}\Set$, let $\dim X$ be the smallest integer $n$ such that
  $X(0<\cdots<k)$ is empty for all $k>n$; if such an integer does not exist then
  we write $\dim X = \infty$.  Let $\mathbf{Z}^+_{\geq 0}$ be the category with
  objects nonnegative integers and $\infty$ and morphisms $n\rto m$ if $n \leq
  m$.  We have a functor $F:s_{inj} \Set \rto \mathbf{Z}^+_{\geq 0}$ given by
  taking $X$ to $\dim X$.  This functor has right adjoint $G$ which takes $n$ to
  $D_n: \Delta^{op}_{inj} \rto \Set$ defined by
  \[D_n(m) = \begin{cases} * & \hbox{if } m\leq n \\ \emptyset &
    \hbox{otherwise}.\end{cases}\]
  Note that $FG = 1$, and thus by Corollary~\ref{D-preorder-adj} we have a model
  structure on $s_{inj}\Set$ where $X$ and $Y$ are weakly equivalent exactly
  when they have the same dimension.
\end{example}

%%%%%%%%%%%%%%%%%%%%%%%%%%%%%%%%%%%%%%%%%%%%%%%%%%%%%%%%%%%%%%%%%%%%%%%%%%%%%%%%%%%%%%%%%%%%%%%%%%%%%%%%%%%%%%%%%%%%%%%%%%%%%%%%%%%%%%
\section{Model structures from simple preorders}

In the previous section, we showed that if $\D$ is a preorder then for any
functor $F:\C\rto \D$ with a section there exists a model structure $\CC$ on
$\C$ such that $\CC_{we} = F^{-1}(\iso\D)$.  This result is not completely
satisfying, however, as the condition that $F$ has a right adjoint section is
much too strong to hold in general.  Thus in this section we will try to analyze
this problem with the (equally strong but) different assumption that the
structure of $\Ho\CC$ is very simple.  In the course of this exploration we
actually construct several model structures whose homotopy categories are {\it
  not} preorders; we include them in the discussion as well, since their proofs
are identical, and they give an interesting family of model structures.

The simplest that $\Ho\CC$ could be, of course, is if it is equivalent to the
trivial category.  Such a model structure always exists, by setting the weak
equivalences and the cofibrations to be all morphisms, and the fibrations to be
the isomorphisms.  This resolved, we consider the second-simplest case, when
$\Ho\CC$ has two objects and one morphism between them.  Let $\E$ be the
category with two objects, $\initial$ and $*$, and one non-identity morphism
$\initial \rightarrow *$.  In this case the model structure $\CC$ divides
objects of $\C$ into ``big'' objects and ``small'' objects, but does not
distinguish between different ``big'' or different ``small'' objects.

\begin{definition}
  A {\it cut} of a category $\C$ is a functor $F:\C\rightarrow \E$; such a cut
  is called {\it trivial} if $\C = F^{-1}(\initial)$ or $\C= F^{-1}(*)$.  Given
  any cut $F$, we define $\I_F = F^{-1}(\initial)$, $\P_F = F^{-1}(*)$.  When
  $F$ is clear from context we omit the subscript from the notation.
\end{definition}

% We give the following lemma without proof, as it is trivial from the definition
% of a cut.

% \begin{lem} \label{liftscut}
%   Let $F$ be a cut of a bicomplete category $\C$.  Then 
%   \begin{align*}
%     \P^\llp &= \{A
%     \rightarrow B\,|\, A\in \I\}\cup \iso\C, \hbox{ and} \\
%     {}^\llp \I &= \{A \rightarrow B\,|\, B\in \P\}\cup \iso \C.
%   \end{align*}
% \end{lem}

We start with a more general construction which will give us three model
structures associated to any cut.  These three model structures will either (a)
classify objects into ``big'' and ``small'', (b) distinguish between all
``small'' objects but have all ``big'' objects be equivalent, or (c) distinguish
between all ``big'' objects but have all ``small'' objects be equivalent.

\begin{prop} \label{double-cut}
  Let $\E'$ be the total order with three objects, $\initial \rightarrow E
  \rightarrow *$.  Suppose that $\C$ has a ``double cut'', a functor
  $F:\C\rightarrow \E'$.  Then we have a model structure $\CC^F$ on $\C$ given by 
  \[\CC^F_{cof} = \{A\rightarrow B
  \,|\, F(B) \neq \initial\}\cup \iso \C \qquad \CC^F_{fib} = \{X \rightarrow Y \,|\, F(X)
  \neq *\}\cup \iso \C,\]
  and
  \[\CC^F_{we} = F^{-1}\{1_\initial,1_*\}\cup\iso \C.\] 
\end{prop}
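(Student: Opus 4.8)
The plan is to verify the axioms of Definition~\ref{shorterAxioms} directly, using throughout that $\E'$ is a \emph{total order}: there is no nonidentity morphism into $\initial$ nor out of $*$; an isomorphism $A\to B$ of $\C$ maps under $F$ to an isomorphism of $\E'$, which forces $F(A)=F(B)$; and if an object $x$ is a retract of an object $y$ in a poset, then $x=y$. I would begin by recording the two derived classes. A short case analysis on the pair of levels $(F(A),F(B))$ of a morphism $f\colon A\to B$ shows that the acyclic fibrations are exactly
\[\CC^F_{fib}\cap\CC^F_{we}=\{f\colon A\to B\mid F(A)=F(B)=\initial\}\cup\iso\C,\]
and, dually, the acyclic cofibrations are $\CC^F_{cof}\cap\CC^F_{we}=\{f\colon A\to B\mid F(A)=F(B)=*\}\cup\iso\C$. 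Here one uses that at level $\initial$ (respectively $*$) every morphism is already a weak equivalence, whereas at level $E$ only isomorphisms are.

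Next I would verify (2OF3). The organizing observation is that every weak equivalence $h\colon A\to B$ satisfies $F(A)=F(B)$: if $h$ is not an isomorphism this holds by definition of $\CC^F_{we}$, and if $h$ is an isomorphism it holds because $F(h)$ is an isomorphism of the poset $\E'$ and is therefore an identity. Thus for composable $f\colon A\to B$ and $g\colon B\to C$ with $F(A)\le F(B)\le F(C)$ in the total order, if two of $f,g,gf$ are weak equivalences then all three objects sit at a single level $z$. If $z\in\{\initial,*\}$ then all three morphisms are automatically weak equivalences; if $z=E$ then each assumed weak equivalence is an isomorphism, whence the third is an isomorphism by composing or by cancelling, and so is a weak equivalence. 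Closure under composition and containment of identities for the three classes is checked in the same spirit, so each is a subcategory.

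For the two weak factorization systems I would apply Lemma~\ref{seeWFS} and check its three hypotheses. Retract-closure of all four classes reduces to the object-level fact that a retract of $g$ shares with $g$ the $F$-levels of its domain and of its codomain, combined with the standard fact that a retract of an isomorphism is an isomorphism. The factorizations are nearly free: for the first system, if $F(B)\neq\initial$ then $f$ is already a cofibration and $f=1_B\circ f$ is the required factorization since $1_B$ is an (acyclic-fibration) isomorphism; if instead $F(B)=\initial$ then $F(A)=\initial$ as well, so $f$ is itself an acyclic fibration and $f=f\circ 1_A$ works. The second system is dual, turning on the fact that $F(A)=*$ forces $F(B)=*$.

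The step I expect to carry the real weight is the lifting property, precisely because the point is that there is nothing to lift. Suppose $i\colon A\to B\in\CC^F_{cof}$ and $p\colon X\to Y\in\CC^F_{fib}\cap\CC^F_{we}$ with neither an isomorphism; then $i$ forces $F(B)\neq\initial$ while $p$ forces $F(Y)=\initial$. The bottom edge of any commutative square sends $B$ to $Y$, so $F$ produces a morphism $F(B)\to\initial$ in $\E'$, which in a total order forces $F(B)=\initial$ --- contradicting $F(B)\neq\initial$. Hence no commutative square exists and $i\llp p$ holds vacuously; and if either $i$ or $p$ is an isomorphism the lift is the evident composite with its inverse. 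The dual argument, using that a morphism out of level $*$ in $\E'$ is trivial, disposes of the second system. Combining the two weak factorization systems with (2OF3) yields the model structure by Definition~\ref{shorterAxioms}.
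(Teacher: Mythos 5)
Your proof is correct and takes essentially the same route as the paper's: identifying the acyclic fibrations as the morphisms lying over $1_\initial$ (plus isomorphisms), observing that every morphism is either a cofibration or an acyclic fibration so factorizations are trivial, and noting that the lifting property holds vacuously because no commutative square with a noninvertible cofibration and a noninvertible acyclic fibration can exist, all assembled via Lemma~\ref{seeWFS} with the second weak factorization system handled by duality. Your write-up is simply more explicit than the paper's about (2OF3), retract-closure, and the subcategory axioms, which the paper dismisses as clear.
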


\begin{proof}
  We need to check the axioms of a model structure.  $\CC^F_{we}$ clearly
  satisfies (2OF3), so we focus on (WFS).  We will prove that $(\CC^F_{cof},
  \CC^F_{we}\cap\CC^F_{fib})$ is a WFS; the other one will
  follow by duality.  We use Lemma~\ref{seeWFS}.  As all three of the above
  classes are closed under retracts, condition (3) is satisfied.  Note that
  \[\CC^F_{we}\cap \CC^F_{fib} = F^{-1}(1_\initial)\cup \iso \C.\] 
  Thus we can say that a noninvertible morphism $f:X\rightarrow Y$ is an acyclic
  fibration when $F(Y) = \initial$, and we see that any morphism is either a
  cofibration or an acyclic fibration.  In such situations factorizations
  trivially exist, and condition (2) is satisfied.  Thus it remains to check
  condition (1).

  Let $f:A \rto B\in \CC^F_{cof}$ and $g:X\rightarrow Y\in \CC^F_{we}\cap
  \CC^F_{fib}$, and suppose that we have a commutative square
  \[\xymatrix{A \ar[r] \ar[d]_f & X \ar[d]^g \\ B \ar[r] & Y}\]
  If either $f$ or $g$ is an isomorphism then this square clearly has a lift, so
  we assume that neither is an isomorphism.  Then $F(g) = 1_\initial$, and in
  particular $F(Y) = \initial$.  But $F(B)\neq \initial$, and thus we cannot
  have a morphism $B\rto Y$.  Contradiction.  Thus in any such square either $f$
  or $g$ must be an isomorphism and $f\llp g$.  So condition (1) holds, and
  $(\CC^F_{cof}, \CC^F_{we}\cap \CC^F_{fib})$ is a WFS, as
  desired.
%
% Then we must have $F(h) = 1_\initial$, and
%   thus $F(f) = 1_\initial$.  Thus all morphisms which lift on the left of $X
%   \rightarrow Y$ with a nonzero number of squares must themselves be acyclic
%   fibrations.  As no non-isomorphism lifts on the left of itself, such morphisms
%   must be isomorphisms.  Thus the noninvertible cofibrations are exactly the
%   morphisms with no squares on the left of any acyclic fibration; this is
%   exactly $\CC^F_{cof}$, as desired.

%   Dually, suppose that we have a morphism $g:X\rightarrow Y$ lifting on the
%   right of $f:A\rightarrow B\in \CC^F_{cof}$.  If we have a commutative square
%   with $f$ on the left and $g$ on the right then we have a morphism
%   $h:B\rightarrow X$; as $F(B) \neq \initial$ we know that $F(X)\neq \initial$,
%   and so $F(Y) \neq \initial$.  Thus $g$ is itself a cofibration, and so to be
%   inside $\CC^F_{cof}^\llp$ it must be an isomorphism.  Thus the only
%   noninvertible morphisms in $\CC^F_{cof}^\llp$ must be ones with no squares on
%   the right of any cofibration, which means that they must have $F(Y) =
%   \initial$, and so are acyclic fibrations.  As acyclic fibrations clearly have
%   no squares on the right of any cofibration, we see that $\CC^F_{cof}^\llp =
%   \CC^F_{we}\cap \CC^F_{fib}$, as desired.
\end{proof}

We now use this proposition to construct the model structures associated to a
cut.

\begin{corollary} \label{balanced-model} Given any cut $F$ of a bicomplete $\C$
  we define the model structure $\CC^{bF}$ (the {\it balanced} model structure
  associated to $F$) on $\C$ by
  \begin{eqnarray*}
    &\CC^{bF}_{we} = \{A \rightarrow B\,|\, B\in \I \hbox{ or } A\in \P\} \cup \iso \C =
    F^{-1}(\iso \E)\\ 
    &\CC^{bF}_{cof} = \{A\rightarrow B\,|\, B\in \P\} \cup \iso \C \qquad \CC^{bF}_{fib} = \{A
    \rightarrow B\,|\, A\in \I\} \cup \iso\C.
  \end{eqnarray*}
  This model structure is both left and right proper.
\end{corollary}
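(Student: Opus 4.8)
The plan is to derive the balanced model structure $\CC^{bF}$ as a special case of the double-cut construction in Proposition~\ref{double-cut}. Given a cut $F:\C\rto\E$, I would compose with the unique (non-full) inclusion $\E\hookrightarrow\E'$ that sends $\initial\mapsto\initial$ and $*\mapsto *$, i.e.\ build the double cut $\bar F:\C\rto\E'$ whose image avoids the middle object $E$. Concretely $\bar F(A)=\initial$ if $A\in\I$ and $\bar F(A)=*$ if $A\in\P$. Then I would check that the three classes produced by Proposition~\ref{double-cut} for $\bar F$ coincide with the three classes claimed for $\CC^{bF}$. For the cofibrations, $\bar F(B)\neq\initial$ means $B\in\P$, matching $\{A\rto B\mid B\in\P\}\cup\iso\C$; for the fibrations, $\bar F(X)\neq *$ means $X\in\I$, matching $\{A\rto B\mid A\in\I\}\cup\iso\C$; and for the weak equivalences, $\bar F^{-1}\{1_\initial,1_*\}$ is exactly the morphisms staying within $\I$ or within $\P$, which is $\{A\rto B\mid B\in\I\text{ or }A\in\P\}$ once one notes that a morphism $A\rto B$ with $A\in\I$ lands in $\I$ (since $\bar F(A)=\initial$ forces $\bar F(B)=\initial$), giving the stated description and its identification with $F^{-1}(\iso\E)$. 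Since the double-cut machinery already delivers a genuine model structure, invoking Proposition~\ref{double-cut} discharges all the model-category axioms at once, so no WFS or lifting arguments need to be redone here.

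It remains to establish the properness claim, which is the substantive part of the proof. For left properness I take a pushout span $C\xleftarrow{g}A\cofib{i}B$ with $g$ a weak equivalence and $i$ a cofibration, forming the pushout $B\cup_A C$, and I must show the induced map $C\rto B\cup_A C$ is a weak equivalence. I would split into cases according to whether the cofibration $i$ is an isomorphism (trivial) or satisfies $B\in\P$. In the latter case the pushout $B\cup_A C$ receives a map from $B\in\P$; here I would argue that $\P$ is closed under the relevant pushouts, so $B\cup_A C\in\P$, whence any morphism into it—in particular $C\rto B\cup_A C$—is a weak equivalence by the description $\{A'\rto B'\mid A'\in\P\}\subseteq\CC^{bF}_{we}$. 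The dual argument handles right properness: for a pullback of a weak equivalence along a fibration, the fibration has source in $\I$, the pullback then lands in $\I$, and any morphism out of an object of $\I$ is a weak equivalence.

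The main obstacle, and the point requiring genuine care rather than a definition check, is verifying that the classes $\I$ and $\P$ behave well under the colimits and limits appearing in the properness argument—specifically that $\P$ is closed under pushouts of its objects and $\I$ under pullbacks. This is where the functoriality of $F$ into the total order does the work: a pushout $B\cup_A C$ admits a canonical map from $B$, and I would use that $F$ preserves neither limits nor colimits in general but that the order structure of $\E'$ constrains $F(B\cup_A C)$ relative to $F(B)$. I expect the cleanest route is to observe that $C\rto B\cup_A C$ is itself a cofibration (pushout of the cofibration $i$, using that cofibrations are closed under pushout by Theorem~\ref{wfs}), and a cofibration whose composite with the weak equivalence $g$ gives the identity-type structure forces the target into $\P$; then membership of $C\rto B\cup_A C$ in $\CC^{bF}_{we}$ follows from target-in-$\P$. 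I would present left properness in full and remark that right properness is dual, flagging only that the pullback stability of $\I$ is the mirror image of the pushout stability of $\P$.
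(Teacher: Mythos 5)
Your derivation of the model structure itself follows exactly the paper's route: compose the cut with the functor $\E\rto\E'$ sending $\initial\mapsto\initial$, $*\mapsto *$, and apply Proposition~\ref{double-cut}. That part is fine, except for one backwards implication in your identification of the weak equivalences: it is \emph{not} true that $A\in\I$ forces $B\in\I$ for a morphism $A\rto B$ (the cut may send such a morphism to $\initial\rto *$). What you actually need, and what is true, is the opposite pair: $B\in\I$ forces $A\in\I$, and $A\in\P$ forces $B\in\P$, both because $\E$ has no morphism $*\rto\initial$.

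The properness argument contains a genuine error, and in fact the statement you set out to prove is false. Left properness asks that the cobase change of the weak equivalence $g:A\rto C$ along the cofibration $i:A\rto B$, namely the map $B\rto B\cup_A C$, be a weak equivalence; you instead aim at $C\rto B\cup_A C$, which is the cobase change of the \emph{cofibration} and is only required to be a cofibration. Compounding this, the membership criterion you invoke is not the one in the statement: $\CC^{bF}_{we}$ consists of morphisms with \emph{target} in $\I$ or \emph{source} in $\P$ (or isomorphisms), so knowing that the target $B\cup_A C$ lies in $\P$ gives nothing. Concretely, take the cut of $\Set$ with $\I=\{\emptyset\}$, let $A=C=\emptyset$, $g$ the identity, and $i:\emptyset\rto\{*\}$; then $C\rto B\cup_A C$ is $\emptyset\rto\{*\}$, a cofibration into an object of $\P$ whose image under $F$ is the noninvertible morphism of $\E$, hence not a weak equivalence. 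The same source/target confusion infects your dual sketch: ``any morphism out of an object of $\I$ is a weak equivalence'' is false; the correct dual statement is that any morphism \emph{into} an object of $\I$ is one. The repair is immediate and is precisely the paper's one-line proof: since $i$ is a noninvertible cofibration, $B\in\P$, so the correct map $B\rto B\cup_A C$ has source in $\P$ and is a weak equivalence by definition; right properness then follows because the model structure is self-dual. Note also that the closure facts you single out as the main obstacle are trivial here: any object receiving a morphism from an object of $\P$ lies in $\P$, simply because the only morphism out of $*$ in $\E'$ is the identity; no stability analysis of $F$ under colimits is needed.
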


$\CC^{bF}$ is the model structure that can distinguish between ``big'' and
``small'' objects, but does not detect any other differences.

\begin{proof}
  Let $F'$ be the double cut defined by composing $F$ with the functor
  $\E\rightarrow \E'$ taking $\initial$ to $\initial$ and $*$ to $*$.  Applying
  Proposition~\ref{double-cut} to $F'$ we get the desired structure.
  % First we will show that (2OF3) holds.  Suppose that two of $f:A \rightarrow
  % B$, $g:B \rightarrow C$ and $gf$ are in $\CC_{we}$.  As $\CC_{we}$ is closed
  % under composition, we can assume that $gf\in \CC_{we}$, and we need to show
  % that $g\in \CC_{we}$ iff $f\in \CC_{we}$.  By symmetry, we can assume without
  % loss of generality that $A\in \P$.  Thus we automatically know that $gf\in
  % \CC_{we}$ implies that $f\in \CC_{we}$, as well.  But as $A\in \P$ and
  % $f:A\rightarrow B$, we must also have $B\in \P$, so $g\in \CC_{we}$, as well.
  % Note that we have shown something stronger than two of three: if $gf\in
  % \CC_{we}$, then $f$ and $g$ must be, as well.

  % Now we will show (WFS).  We will only show that $(\CC_{cof}\cap \CC_{we},
  % \CC_{fib})$ forms a WFS; the other one will follow by
  % symmetry. Note that $\CC_{cof}\cap \CC_{we} = \{A\rightarrow B\,|\, A\in
  % \P\}$.  Thus $(\CC_{cof} \cap \CC_{we})\cup \CC_{fib} = \C$, so
  % factorization trivially works.  It remains to show that lifting works.  If we
  % have a square
  % \[\xymatrix{
  %   A \ar[r]\cofib[d]_{i}^\sim & X \fib[d]^{p} \\
  %   B \ar[r] & Y }
  % % \begin{diagram}
  % %   {A & X \\ B & Y \\}; \to{1-1}{1-2} \to{2-1}{2-2} \cofib{1-1}{2-1}_i
  % %   \fib{1-2}{2-2}^p
  % % \end{diagram}
  % \]
  % then $B \in \P$ and $X\in \I$, which means that $A\in \I$ and $Y\in \P$.  In
  % particular, this means that neither $i$ nor $p$ is a weak equivalence.  Thus
  % the lifting condition is trivial, as there are no squares in which to require
  % lifts.  Thus we are done.

  As the model structure is self-dual, it suffices to show that it is left
  proper.  Suppose that we have a diagram of noninvertible morphisms
  \[\xymatrix{C & A \ar[l] \cofib[r]^i & B.}\]
  As $i$ is a cofibration, $B\in \P_F$, and thus the pushout $B \rightarrow
  B\cup_A C\in \P_F$.  But then $B\rightarrow B\cup_A C$ is a weak equivalence,
  as desired.
\end{proof}

\begin{remark}
  Note that we didn't use the fact that $A \rto C$ is a weak equivalence, so in
  fact the pushout of any morphism along a noninvertible cofibration must be a
  weak equivalence.  This reflects that the balanced model structure is not very
  discriminating.
\end{remark}

Thus for any cut $F:\C \rto \E$ we can construct a model structure with homotopy
category equivalent to $\E$.  Note, however, that $F$ need not be a Quillen
equivalence, as it does not necessarily have an adjoint.  For example, consider
the category
\[\xymatrix{& A \ar[r] & B \ar[rd] \\ \initial \ar[ru] \ar[rd] & & & {*} \\
  & C \ar[r] & D \ar[ru]}\]
and define $F$ to map $\initial$, $A$ and $B$ to $\initial$ and $C$, $D$ and $*$
to $*$.  Then $F$ does not preserve either pullbacks or pushouts, so it is not a
right or a left adjoint.

% The above model structure ``collapses'' all of the structure in $\I$ and $\P$
% down to points.  It turns out that we can also use Proposition~\ref{double-cut}
% to construct a model structure where the collapse only happens in one half of
% the cut.  One advantage of this approach is that we will be able to give a lower
% bound on the number of non-equivalent model structures on a given bicomplete
% category $\C$.

\begin{corollary} \label{right-model} Given any cut $F$ of a bicomplete $\C$ we
  have a model structure $\CC^{rF}$ on $\C$ given by
  \begin{eqnarray*}
    &\CC^{rF}_{we} = \P\cup\iso\C \qquad
    \CC^{rF}_{cof} = \C \qquad \CC^{rF}_{fib} = (\CC^{rF}_{we})^\llp.
  \end{eqnarray*}
  This model structure is left proper.  As the definition of a cut is self-dual,
  we also have a dual model structure $\CC^{\ell F}$ where all morphisms are
  fibrations and the noninvertible weak equivalences are morphisms in $\I$; this
  model structure is right proper.
\end{corollary}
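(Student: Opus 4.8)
The plan is to deduce $\CC^{rF}$ from Proposition~\ref{double-cut}, exactly as the balanced model structure was obtained, but choosing a different functor $\E\rightarrow\E'$. Let $F'\colon\C\rightarrow\E'$ be the double cut obtained by composing $F$ with the functor $\E\rightarrow\E'$ sending $\initial\mapsto E$ and $*\mapsto *$. Since the image of $F'$ never meets $\initial$, every object $B$ satisfies $F'(B)\neq\initial$, so $\CC^{F'}_{cof}=\C$; moreover no morphism has $F'$-image $1_\initial$, while the morphisms with $F'$-image $1_*$ are precisely those of $\P$, so $\CC^{F'}_{we}=\P\cup\iso\C=\CC^{rF}_{we}$. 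Thus Proposition~\ref{double-cut} already produces a model structure whose weak equivalences and cofibrations agree with those prescribed in the statement.

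The only remaining point is to reconcile the fibrations, which the statement defines as $(\CC^{rF}_{we})^\llp$ rather than by the formula in Proposition~\ref{double-cut}. But in the model structure that proposition produces, the pair $(\CC^{F'}_{cof}\cap\CC^{F'}_{we},\CC^{F'}_{fib})$ is a WFS, and since $\CC^{F'}_{cof}=\C$ this pair is just $(\CC^{rF}_{we},\CC^{F'}_{fib})$. Being a WFS it is in particular a maximal lifting system, so its right class is the left-lifting orthogonal of its left class; that is, $\CC^{F'}_{fib}=(\CC^{rF}_{we})^\llp=\CC^{rF}_{fib}$. Hence the two fibration classes coincide and $\CC^{rF}=\CC^{F'}$ is a model structure.

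For left properness I would argue directly, using that every morphism is a cofibration, so that it suffices to show the pushout of a weak equivalence along an arbitrary morphism is again a weak equivalence. Consider a cospan $C\xleftarrow{\,g\,}A\xrightarrow{\,i\,}B$ with $g\in\CC^{rF}_{we}$. If $g$ is an isomorphism then so is its pushout $B\rightarrow B\cup_A C$, which is therefore a weak equivalence. Otherwise $g\in\P$, so $A\in\P$, i.e.\ $F(A)=*$; since the only morphism of $\E$ out of $*$ is $1_*$, the map $i\colon A\rightarrow B$ forces $F(B)=*$, and then the coprojection $B\rightarrow B\cup_A C$ forces $F(B\cup_A C)=*$. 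Both its endpoints thus lie in $\P$, so this coprojection is a morphism of $\P$ and hence a weak equivalence.

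Finally, the dual model structure $\CC^{\ell F}$ and its right properness follow by duality: the relabeling $\E^{op}\cong\E$ turns the cut $F$ into a cut $\C^{op}\rightarrow\E$ whose $\P$-fiber is $\I_F$, and applying the construction above in $\C^{op}$ yields a left proper model structure there; reading it back in $\C$ interchanges cofibrations with fibrations and left with right properness, giving exactly $\CC^{\ell F}$. The main (and only mildly delicate) obstacle is the bookkeeping matching the two descriptions of the fibrations; the geometric content, namely the absence of any morphism of $\E$ emanating from $*$, is what makes both the model-structure axioms and left properness fall out immediately.
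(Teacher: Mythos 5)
Your proof is correct, and for the existence of the model structure it follows the same route as the paper: compose $F$ with the functor $\E\rto\E'$ sending $\initial\mapsto E$ and $*\mapsto *$, then invoke Proposition~\ref{double-cut}. Two points differ. First, you explicitly reconcile the fibration class produced by Proposition~\ref{double-cut} with the class $(\CC^{rF}_{we})^\llp$ appearing in the statement, via the observation that $(\CC^{F'}_{cof}\cap\CC^{F'}_{we},\CC^{F'}_{fib})=(\CC^{rF}_{we},\CC^{F'}_{fib})$ is a WFS and hence a maximal lifting system, so its right class is forced to be $(\CC^{rF}_{we})^\llp$; the paper leaves this identification implicit, so your write-up is actually more complete here. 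Second, for left properness the paper argues abstractly: since every morphism is a cofibration, the weak equivalences are exactly the acyclic cofibrations, i.e.\ the left class of the WFS $(\CC^{rF}_{we},\CC^{rF}_{fib})$, and left classes of maximal lifting systems are closed under pushouts by Theorem~\ref{wfs}, so left properness is immediate. You instead chase $F$-values around the pushout square, using that the only morphism of $\E$ out of $*$ is $1_*$. Both arguments are valid; the paper's is shorter and proves the stronger general fact that any model structure in which all morphisms are cofibrations is left proper, while yours is elementary and makes visible exactly which feature of the cut makes the pushout stay inside $\P$. The dual statement is handled by duality in both proofs.
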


$\CC^{rF}$ can distinguish between all objects in $\I$ (the ``small'' objects),
but collapses all objects in $\P$ to a single one.

\begin{proof}
  We construct $\CC^{rF}$ by composing the cut with the functor $\E\rto \E'$
  which takes $\initial$ to $E$ and $*$ to $*$ and taking the model structure
  constructed in Proposition~\ref{double-cut}.  As
  $(\CC^{rF}_{we},\CC^{rF}_{fib})$ is a WFS, we know that
  $\CC^{rF}_{we}$ is closed under pushouts.  Thus $\CC^{rF}$ is left proper.

  However, $\CC^{rF}$ does not have to be right proper.  Let $\C$ be the category
  \[\xymatrix{\initial \ar[r] \ar[d] & A \ar[d] \\ B \ar[r] & {*}}\]
  and let $F$ be the cut that takes $\initial$ and $A$ to $\initial$ and $B$ and
  $*$ to $*$.  Then the only nontrivial weak equivalence is $B\rightarrow *$,
  and $A$ is a fibrant object.  If $\CC^{rF}$ were proper we would have to have
  $\initial \rightarrow A$ be a weak equivalence, but in this model structure it
  is not.  Thus in this case $\CC^{rF}$ is not right proper, as claimed.

  The second part of the corollary follows by duality.
\end{proof}

In particular, the two examples constructed in this proof also prove the
following:
\begin{corollary}
  The model structure constructed in Proposition~\ref{double-cut} is not
  necessarily left or right proper.
\end{corollary}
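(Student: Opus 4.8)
The plan is to observe that nothing new needs to be constructed: the two model structures $\CC^{rF}$ and $\CC^{\ell F}$ of Corollary~\ref{right-model} are, by their very definitions, instances of the construction of Proposition~\ref{double-cut}, so the examples already produced in that proof furnish the required counterexamples directly. Recall that $\CC^{rF}$ is defined by composing a cut $F:\C\rto\E$ with the order-preserving functor $\E\rto\E'$ sending $\initial\mapsto E$ and $*\mapsto *$ and then taking the model structure that Proposition~\ref{double-cut} assigns to the resulting double cut $\C\rto\E'$. Likewise $\CC^{\ell F}$ is, up to duality, the model structure of Proposition~\ref{double-cut} attached to the composite of $F$ with the functor $\E\rto\E'$ sending $\initial\mapsto\initial$ and $*\mapsto E$. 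Thus every instance of $\CC^{rF}$ and of $\CC^{\ell F}$ is an instance of the double-cut model structure.

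First I would invoke the explicit example from the proof of Corollary~\ref{right-model}: for the four-object category and the cut sending $\initial,A\mapsto\initial$ and $B,*\mapsto *$, the pushout of the weak equivalence $B\rto *$ fails to remain a weak equivalence, so $\CC^{rF}$ is not right proper. Since this $\CC^{rF}$ is a double-cut model structure in the sense above, it exhibits a double cut for which the model structure of Proposition~\ref{double-cut} is not right proper.

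Then I would apply duality to obtain the failure of left properness. The notion of double cut is self-dual, a functor $\C\rto\E'$ corresponding to a functor $\C^{op}\rto\E'$ after reversing the total order $\initial\rto E\rto *$, and under this correspondence left and right properness are interchanged while $\CC^{rF}$ and $\CC^{\ell F}$ are swapped. Running the $\CC^{rF}$ counterexample in the opposite category therefore produces a double cut for which the associated model structure $\CC^{\ell F}$, again an instance of Proposition~\ref{double-cut}, is not left proper. Combining the two instances shows that the double-cut model structure is neither necessarily left nor necessarily right proper.

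I do not anticipate any real obstacle here beyond bookkeeping; the one point that should be stated carefully is the identification of $\CC^{rF}$ and $\CC^{\ell F}$ as genuine instances of Proposition~\ref{double-cut}, which is immediate since Corollary~\ref{right-model} defines them by precisely that construction. Once this is noted, the corollary follows at once from the examples already in hand.
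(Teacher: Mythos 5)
Your proposal is correct and follows the paper's own argument: the paper likewise notes that $\CC^{rF}$ and $\CC^{\ell F}$ are instances of the double-cut construction and cites the explicit four-object counterexample (and its dual) from the proof of Corollary~\ref{right-model} to conclude failure of right and left properness respectively. The only cosmetic difference is that you hedge with ``up to duality'' when identifying $\CC^{\ell F}$, whereas it is directly the double-cut structure for the composite $\initial\mapsto\initial$, $*\mapsto E$; this does not affect the argument.
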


% \begin{proof}
%   We prove the first part of the corollary; the second will follow by duality.
%   Let $F'$ be the double cut defined by composing $F$ with the functor
%   $\E\rightarrow \E'$ given by taking $\initial$ to $E$ and $*$ to $*$.  Then by
%   proposition \ref{double-cut} we get the above model structure.
%   % $\CC_{we}$ is clearly satisfies (2OF3), so by lemma \ref{allcofibs} we only
%   % need to check that $(\CC_{we}, \CC_{fib})$ is a WFS.  By
%   % definition, $\CC_{fib} = \CC_{we}^\llp = \{X\rightarrow Y\,|\, X\in \I\}\cup
%   % \iso\C$.  Now consider $i:A \rightarrow B \in {}^\llp\CC_{fib}$.  Given a
%   % diagram
%   % \[\xymatrix{A \ar[r] \ar[d]_i & X \fib[d]^p \\ B \ar[r] & Y}\]
%   % with $X\in \I$ we know that $A\in \I$.  But then $i\in \CC_{fib}$, so we know
%   % that $i\in \iso\C$.  Thus for any $i \in {}^\llp\CC_{fib} \backslash \iso\C$
%   % we must have no such diagrams.  In particular, this means that we must have
%   % $A\in \P$, so $i\in \P = \CC_{we}$.  Thus $^\llp \CC_{fib} = \CC_{we}$, as
%   % desired.  To show that $(\CC_{we}, \CC_{fib})$ is a WFS
%   % it remains to show that all morphisms can be factored as a weak equivalence
%   % followed by a fibration.  However, we know that $\CC_{we}\cup \CC_{fib} = \C$,
%   % so this is clearly true.
% \end{proof}

Thus any cut in a category $\C$ gives at least three different (but possibly
equivalent) model structures on $\C$.  This means that any category with
uncountably many cuts has uncountably many model structures, and more generally
that any category with $\kappa$ cuts has at least $\kappa$ model structures.

\begin{example}
  Any cut of a pointed category must be trivial, which means that in a model
  structure associated to a cut, the weak equivalence are either all morphisms
  or just the isomorphisms.
\end{example}

\begin{example}
  The category $\Set$ has a single non-trivial cut, which takes the empty set to
  $\initial$ and all other sets to $*$.  All model structures on $\Set$ where
  not all morphisms are weak equivalences are Quillen equivalent to either
  $\CC^{bF}$ or $\CC^{\ell F}$. (For an enumeration of the model structures on
  $\Set$, see \cite{camarenaweb}.)

  More generally, many $\Set$-based categories (topological spaces, simlicial
  sets, etc.) have a single non-trivial cut, which gives rise to a similar
  family of model structures.  However, these do not generally cover all
  possible model structures.
\end{example}

\begin{example} \label{semi-simplicial2} The category $s_{inj}\Set$ (defined in
  Example~\ref{semi-simpl-dim}) has many different cuts; for example, for any $n$
  we have a cut $F_n$ defined by $F_n(X) = \initial$ if $\dim X \leq n$ and
  $F_n(X) = *$ otherwise.  Corollaries~\ref{balanced-model} and
  \ref{right-model} give model structures which distinguish between
  semi-simplicial sets based on their dimensions: $\CC^{bF_n}$ has $X$ and $Y$
  equivalent if $\dim X, \dim Y \leq n$ or $\dim X, \dim Y > n$, $\CC^{rF_n}$
  has $X$ and $Y$ equivalent if $\dim X, \dim Y > n$ and $\CC^{\ell F_n}$ has
  $X$ and $Y$ equivalent if $\dim X, \dim Y \leq n$.
\end{example}

It is possible for different cuts to yield equivalent model structures.  For
example, consider the category $\C$ with objects $\mathbb{R} \cup
\{\pm\infty\}$, and with a morphism $a\rightarrow b$ if $a<b$.  Let
\[F_a(b) = \begin{cases} \initial & \hbox{if } b<a \\ * &
  \hbox{otherwise}.\end{cases}\] Then $F_a$ is a cut for any finite value of
$a$; let $\CC_a$ be the model structure constructed by
Corollary~\ref{right-model} for $F_a$.  If we choose $a<a'$ then the functor
$G:\CC_a \rightarrow \CC_{a'}$ given by $G(b) = b-a'+a$ preserves both
cofibrations and weak equivalences and is clearly an equivalence of categories,
and thus gives a Quillen equivalence between $\CC_a$ and $\CC_{a'}$.

However, in many cases we can show that different cuts will yield inequivalent model structures.  
\begin{corollary} \label{diff-cuts} If a category $\C$ has a family of cuts
  $\{F_\alpha:\C\rightarrow \E\}_{\alpha\in A}$ such that if $\alpha\neq\alpha'$
  then $\I_\alpha$ and $\I_{\alpha'}$ are not equivalent categories, then $\C$
  has at least $|A|$ nonequivalent model structures.
  
  Dually, if such a family of cuts exists with $\P_\alpha\not\simeq
  \P_{\alpha'}$ for all distinct $\alpha,\alpha'\in A$ then $\C$ has at least
  $|A|$ nonequivalent model structures.
\end{corollary}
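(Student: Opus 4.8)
The plan is to exploit the model structures $\CC^{rF_\alpha}$ from Corollary~\ref{right-model}, which are built so that the subcategory $\I_\alpha$ is precisely the piece that is preserved as non-collapsed data. The key observation is that the homotopy category $\Ho\CC^{rF_\alpha}$ should recover $\I_\alpha$ (up to equivalence): since the noninvertible weak equivalences in $\CC^{rF_\alpha}$ are exactly the morphisms in $\P_\alpha = F_\alpha^{-1}(*)$, all the ``big'' objects get identified with a single object in the homotopy category, while the ``small'' objects in $\I_\alpha$ are kept distinct. Thus two model structures $\CC^{rF_\alpha}$ and $\CC^{rF_{\alpha'}}$ with $\I_\alpha \not\simeq \I_{\alpha'}$ will have inequivalent homotopy categories, and in particular cannot be Quillen equivalent.

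First I would make precise the relationship between $\Ho\CC^{rF_\alpha}$ and $\I_\alpha$. Every object of $\P_\alpha$ maps to $*$ under $F_\alpha$, and $\CC^{rF_\alpha}_{we}=\P_\alpha\cup\iso\C$, so any two objects of $\P_\alpha$ admitting a morphism between them (in particular mapping to $*$) become isomorphic in the localization; more care is needed to see that \emph{all} objects of $\P_\alpha$ collapse to one, but this follows from the structure of the cut together with the existence of the terminal object $*\in\P_\alpha$. Then I would argue that $\Ho\CC^{rF_\alpha}$ is equivalent to the full subcategory on $\I_\alpha$ with a single adjoined terminal object, so that the isomorphism classes and morphism structure of $\I_\alpha$ are a Quillen-invariant of $\CC^{rF_\alpha}$.

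Next I would invoke the standard fact that Quillen equivalent model categories have equivalent homotopy categories. Given $\alpha\neq\alpha'$, if $\CC^{rF_\alpha}$ and $\CC^{rF_{\alpha'}}$ were Quillen equivalent, their homotopy categories would be equivalent, forcing $\I_\alpha\simeq\I_{\alpha'}$, contradicting the hypothesis. This gives at least $|A|$ pairwise-inequivalent model structures, proving the first statement. The dual statement follows by applying the same argument to the model structures $\CC^{\ell F_\alpha}$, in which the roles of $\I$ and $\P$ are interchanged and $\Ho\CC^{\ell F_\alpha}$ recovers $\P_\alpha$.

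\emph{The hard part will be} pinning down the precise form of the homotopy category $\Ho\CC^{rF_\alpha}$ and verifying that it genuinely reflects the equivalence type of $\I_\alpha$ rather than some coarser invariant. In particular one must check that no further identifications among the objects of $\I_\alpha$ occur in the localization (the noninvertible weak equivalences all have target in $\P_\alpha$, so morphisms internal to $\I_\alpha$ are never inverted, which should suffice) and that the added ``big'' object does not accidentally create extra equivalences between distinct cuts. A cleaner alternative, avoiding the explicit computation of $\Ho$, is to observe that any Quillen equivalence restricts to an equivalence between the full subcategories of those objects that are \emph{not} weakly equivalent to the terminal object; these are exactly the objects of $\I_\alpha$, and an equivalence of model categories must match up this intrinsically-defined subcategory, again yielding $\I_\alpha\simeq\I_{\alpha'}$.
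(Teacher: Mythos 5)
Your proposal is correct and takes essentially the same route as the paper: both use the model structures $\CC^{rF_\alpha}$ from Corollary~\ref{right-model}, identify $\Ho\CC^{rF_\alpha}$ with $(\I_\alpha)_+$ (the category $\I_\alpha$ with a new terminal object adjoined), note that a zigzag of Quillen equivalences would force $(\I_\alpha)_+\simeq(\I_{\alpha'})_+$ and hence $\I_\alpha\simeq\I_{\alpha'}$, and handle the dual statement via $\CC^{\ell F_\alpha}$. The issue you flag as the ``hard part'' is resolved in the paper exactly by your cleaner alternative: an equivalence must take terminal objects to terminal objects, so deleting them from $(\I_\alpha)_+$ and $(\I_{\alpha'})_+$ yields the desired equivalence $\I_\alpha\simeq\I_{\alpha'}$.
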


\begin{proof}
  Let $\alpha\neq \alpha'\in A$, and let $(\I_\alpha,\P_\alpha)$ and
  $(\I_{\alpha'}, \P_{\alpha'})$ be obtained from $F_\alpha$ and $F_{\alpha'}$,
  respectively.  Let $\CC_\alpha$ and $\CC_{\alpha'}$ be the model structures
  constructed by the first part of Corollary~\ref{right-model}.  A zigzag of
  Quillen equivalences between $\CC_\alpha$ and $\CC_{\alpha'}$ would give an
  equivalence of homotopy categories.  However, the homotopy category of
  $\CC_\alpha$ is $(\I_\alpha)_+$, the category $\I_\alpha$ with a new terminal
  object added.  As an equivalence must take terminal objects to terminal
  objects, an equivalence of $(\I_\alpha)_+$ with $(\I_{\alpha'})_+$ must give
  an equivalence of $\I_\alpha$ with $\I_{\alpha'}$; as these are inequivalent,
  we know that $\CC_\alpha$ and $\CC_{\alpha'}$ must be inequivalent, as desired.
  
  The dual version follows from the dual version of Corollary~\ref{right-model}.
\end{proof}

\begin{example}
  The model structures $\CC^{rF_n}$ from Example~\ref{semi-simplicial2} are all
  non-equivalent.  Let $\S_n = F_n^{-1}(\initial)$; by Corollary~\ref{diff-cuts}
  it suffices to check that these are nonequivalent.
  
  We define the {\it monic length} of a category $\C$ with a terminal object to
  be the maximum length of a chain
  \[A_0 \rto A_1 \rto \cdots \rto A_k = * \in \C\] such that each morphism is a
  noninvertible monomorphism and $A_k$ is the terminal object of $\C$; this is
  an equivalence invariant.  In $\S_n$ the terminal object is $D_n$, defined by
  \[D_n(k) = \begin{cases} * & \hbox{if } k \leq n \\ \emptyset &
    \hbox{otherwise}.\end{cases}\] All monomorphisms in $\S_n$ are levelwise
  injections, so the monic length of $\S_n$ is $n+1$, given by
  \[\emptyset \rto D_0 \rto D_1 \rto \cdots \rto D_n.\]
  As if $m\neq n$ then $m+1\neq n+1$, we see that $\S_m$ and $\S_n$ are not
  equivalent, as claimed.
\end{example}

%%%%%%%%%%%%%%%%%%%%%%%%%%%%%%%%%%%%%%%%%%%%%%%%%%%%%%%%%%%%%%%%%%%%%%%%%%%%%%%%%%%%%%%%%%%%%%%%%%%%%%%%%%%%%%%%%%%%%%%%%%%%%%%%%%%%%%5
\section{The generalized core model structure} \label{sec:gencore}

There are two motivations for the construction of the generalized core model
category.  The first is a continuation of the type of analysis given in the
previous section; however, in this case instead of taking $\D$ to be the
simplest possible preorder, we take it to be the most complicated. More
formally, we have the following definition:

\begin{definition}
  Let $\C$ be a category. We define the preorder $P(\C)$ with $\ob P(\C) = \ob
  \C$, and $\Hom_{P(\C)}(X,Y)$ equaling the one-point set if there exists a
  morphism $X\rightarrow Y\in \C$, and the empty set otherwise.  We will write
  $X\sim Y$ if $X$ is isomorphic to $Y$ in $P(\C)$.
\end{definition}

There is a canonical functor $R_\C: \C \rightarrow P(\C)$, such that any
functor $F:\C \rto \D$, where $\D$ is a preorder, factors through $R_\C$.  In
this section, we construct a model structure on $\C$ such that the weak
equivalences are $R_\C^{-1}(\iso P(\C))$.  Note that $P$ is a functor $\Cat \rto
\Pos$, which is left adjoint to the forgetful functor $U: \Pos \rto
\Cat$.\footnote{Technically, $P$ and $U$ are only functors if we restrict our
  attention to small categories; otherwise, we need to worry about the
  $2$-category structure of $\Cat$ and $\Pos$ and check that it is a
  $2$-adjunction.  However, as in the rest of this paper we are only concerned
  with the functor $R_\C$, which exists in any case, we blithely sweep these
  problems under the rug.}

The second motivation for constructing the generalized core model structure is
to generalize the construction of the core model category structure in
\cite{droz12}.  The {\it core} of a graph is the smallest retract of the graph,
and two graphs $G$ and $G'$ have isomorphic cores if and only if there exist
morphisms $f:G \rto G'$ and $g:G' \rto G$ in the category of graphs.  (For more
on cores, see \cite[Chapter 6]{godsilroyle}.)  In \cite{droz12}, Droz
constructed a model structure on the category of finite graphs where the weak
equivalences are exactly the morphisms between graphs with isomorphic cores.  It
turns out that a similar construction will work in any category, and in
particular on the category of infinite graphs.  This gives rise to an
application to infinite graph theory: an alternate definition of the core of an
infinite graph.  There is very little known about cores of infinite graphs, and
it turns out that the homotopy-theoretic perspective gives an entirely new
possible definition of a core.  For more on this, see
Section~\ref{infinite-cores}.

The main result of this section is the following:

\begin{thm}
  \label{generalizedCoreModel}
  There is a model structure $\CC^{core}$ with homotopy category $P(\C)$ on any
  bicomplete category $\C$. A morphism $f:A\rightarrow B$ is a weak equivalence
  iff $A\sim B$. The acyclic fibrations are exactly the retractions in $\C$.

  If in addition $\C$ has splitting and disjoint coproducts then this structure
  is both left and right proper.
\end{thm}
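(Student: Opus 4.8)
The plan is to build the structure via Lemma~\ref{constructModel}, taking $w\C = R_\C^{-1}(\iso P(\C))$ (equivalently, the morphisms $f\colon A\to B$ with $A\sim B$) as the weak equivalences and $\tilde f\C$ equal to the class of retractions as the intended acyclic fibrations. First I would dispatch the hypotheses of that lemma. A retraction $r\colon A\to B$ has a section, so there are morphisms both ways and $A\sim B$; hence $\tilde f\C\subseteq w\C$. By Lemma~\ref{initialTerminalLifting} the retractions equal $\{\initial\to A\}^\llp$, so by Theorem~\ref{wfs} they are closed under pullback. Finally $w\C$ satisfies (2OF3) because $\sim$ is an equivalence relation on objects and $R_\C$ is a functor: whenever two of $A\sim B$, $B\sim C$, $A\sim C$ hold the third does too. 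This yields $\CC^{core}_{cof}={}^\llp\tilde f\C$ and $\CC^{core}_{fib}=(\CC^{core}_{cof}\cap w\C)^\llp$.

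The first factorization system $(\CC^{core}_{cof},\tilde f\C)$ I would verify through Lemma~\ref{seeWFS}. Lifting and closure under retracts hold by construction, so only the factorization is at issue, and here a mapping-cylinder construction works uniformly: factor $f\colon A\to B$ as $A\xrightarrow{i_1}A\sqcup B\xrightarrow{(f,1_B)}B$. The second map is a retraction (with section $i_2$), and $i_1$ is always a cofibration: given a retraction $p$ with section $t$ and a square under $i_1$ over $p$, the map that is the given lift on the $A$-summand and $t$ postcomposed with the bottom map on the $B$-summand is a lift. This also lets me identify the acyclic fibrations. Retractions are weak equivalences and lift against all cofibrations, hence against all acyclic cofibrations, so they are fibrations; thus the retractions lie in $w\C\cap\CC^{core}_{fib}$. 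Conversely, if $f\colon A\to B$ is both a fibration and a weak equivalence then $A\sim B$, so a morphism $B\to A$ exists and $i_1\colon A\to A\sqcup B$ is now \emph{also} a weak equivalence, i.e.\ an acyclic cofibration; lifting $i_1$ against $f$ in the square with top $1_A$ and bottom $(f,1_B)$ produces $h$ with $fhi_2=1_B$, so $hi_2$ is a section of $f$ and $f$ is a retraction. Hence $w\C\cap\CC^{core}_{fib}=\tilde f\C$, as the theorem asserts.

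The crux — and the step I expect to be the main obstacle — is the second factorization, writing every $f\colon A\to B$ as an acyclic cofibration followed by a fibration, together with the attendant description of the fibrations. When $f$ is already a weak equivalence the cylinder factorization suffices, since $A\sim B$ promotes $i_1$ to an acyclic cofibration while $(f,1_B)$ remains a retraction, hence a fibration. The difficulty is concentrated in the case $A\not\sim B$: the intermediate object of such a factorization must be weakly equivalent to $A$, yet large enough that the right-hand map lifts against \emph{every} acyclic cofibration, and reconciling these two demands is exactly where the combinatorics of $P(\C)$ enters. I would approach this by first pinning down the fibrations via their generating lifting problems against the maps $i_1\colon X\to X\sqcup Y$ with $X\sim Y$ — every acyclic cofibration is a retract of such a map, by the retract argument applied to the cylinder factorization — and then building the intermediate object as a coproduct of $A$ with a suitably chosen object in its $\sim$-class that solves these lifting problems.

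For properness I would treat the two sides separately and lean on the coproduct hypotheses. Left properness is the easier half and is nearly formal: since $(\CC^{core}_{cof},\tilde f\C)$ is a weak factorization system, $w\C$ is stable under the relevant pushouts, so the pushout of a weak equivalence along a cofibration lands in the correct $\sim$-class; splitting coproducts let me decompose the pushout object and exhibit morphisms in both directions explicitly. Right properness is where disjoint coproducts do the real work: given a fibration and a weak equivalence, I would use splitting coproducts to break the pullback into summands and Lemma~\ref{lem:no-trans} to rule out the degenerate gluings, thereby showing the pullback again lies in $w\C$. I expect the bookkeeping of these coproduct decompositions, rather than any conceptual point, to be the bulk of the properness argument.
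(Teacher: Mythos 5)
Your first two paragraphs match the paper's proof and are correct: the setup via Lemma~\ref{constructModel}, the cylinder factorization $A \rto A\sqcup B \rto B$ for the $(\CC^{core}_{cof},\tilde f\C)$ system, and the identification $\tilde f\C = \CC^{core}_{we}\cap\CC^{core}_{fib}$ (your lift of $i_1:A\rto A\sqcup B$ against $f$ is a harmless variant of the paper's lift of $i_1:B\rto B\sqcup B$). The genuine gap is exactly where you say the crux is: the (acyclic cofibration, fibration) factorization is never constructed. Your reduction to generating maps is valid --- by the retract argument every acyclic cofibration is a retract of some $i_1:X\rto X\sqcup Y$ with $X\sim Y$ --- but ``a suitably chosen object in its $\sim$-class that solves these lifting problems'' is the entire content of the theorem, and no such object is exhibited. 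The paper's choice is $A\times B$, which lies in the $\sim$-class of $A$ precisely because $f$ exists: factor $f:A\rto B$ as
\[
A \xrightarrow{\ i_1\ } A\sqcup(A\times B) \xrightarrow{\ f\sqcup p_2\ } B.
\]
Here $i_1$ is an acyclic cofibration since $1_A\sqcup p_1$ maps back to $A$, and $f\sqcup p_2$ is a fibration by the following argument, which is the heart of the whole proof. Given a square with acyclic cofibration $e:K\rto L$ on the left, top $k$ and bottom $l$, choose $g:L\rto K$ (it exists because $e$ is a weak equivalence), set $a=(1_A\sqcup p_1)\circ k\circ g: L\rto A$, and consider the map $K\sqcup L\rto A\sqcup(A\times B)$ given by $k$ on $K$ and by $a\times l$ (landing in the $A\times B$ summand) on $L$. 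Since $e\sqcup 1_L:K\sqcup L\rto L$ is a retraction and $e$ is a cofibration, the square with top $i_1:K\rto K\sqcup L$ and bottom $1_L$ has a lift $h:L\rto K\sqcup L$, and composing $h$ with the displayed map solves the original lifting problem. (In your generating-map formulation the same formula --- $k$ on $X$ and $a\times(l i_2)$ on $Y$ --- is the lift directly.) Without this object and this verification there is no proof.

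The properness paragraph also needs repair. The justification ``since $(\CC^{core}_{cof},\tilde f\C)$ is a weak factorization system, $w\C$ is stable under the relevant pushouts'' is false: a WFS makes its \emph{left} class pushout-stable, and $w\C$ is not the left class of either WFS here (the retractions are a right class, hence pullback-stable). Both halves of properness in the paper rest on Proposition~\ref{cofibrationDecomposition}: with splitting and disjoint coproducts every cofibration is isomorphic to a canonical inclusion $i_1:A\rto A\sqcup X$, proved by lifting $c$ against the retraction $c\sqcup 1_B$, splitting the lift, and applying Lemma~\ref{lem:no-trans}. Granting that, left properness is the computation you gesture at (the pushout of $w$ along $i_1$ is $w\sqcup 1_C$, and $f\sqcup 1_C$ maps back for any $f:B\rto A$), and right properness follows roughly your outline (factor the weak equivalence, use that acyclic fibrations pull back, split the fibration over the coproduct, and use disjointness to produce the missing morphism into the pullback). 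So your ingredient list is right, but the decomposition of cofibrations is indispensable and missing. (A genuinely formal route to left properness does exist --- every object is cofibrant here, since $\initial\rto A$ lifts against any retraction via its section, and model categories with all objects cofibrant are left proper --- but that is a different and nontrivial theorem, not a consequence of the WFS axioms.)
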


We call $\CC^{core}$ the {\it generalized core model structure} on $\C$.  Before
we begin the proof, we present a couple of examples of such model structures.

\begin{example} \label{empty-nonempty-model} Let $\Set$ be the category of sets.
  $P(\Set)$ is the category with two objects and one noninvertible morphism
  between them.  The core model structure can distinguish between empty and
  nonempty sets, but cannot distinguish between nonempty sets.  The fibrations
  are the surjective morphisms and the cofibrations are the injective morphisms.

  More generally, for many set-based categories
  (such as topological spaces, simplicial sets, etc.) the core model structure
  has as the weak equivalences all morphisms between ``nonempty'' objects.
\end{example}

% \begin{example}
%   Consider the category of finite graphs and graph morphisms between them; here,
%   a graph is a finite set $N$ of nodes and a $E \subseteq N^2$ of edges such
%   that if $(i,j) \in E$ then $(j,i) \in E$.  (Thus our graphs are undirected and
%   with no repeated edges, but with the possibility of self-loops.)  A morphism
%   of graphs $(N,E) \rto (N',E')$ is a map of sets $f:N \rto N'$ such that if
%   $(i,j)\in E$ then $(f(i),f(j))\in E'$.  This category has all finite limits
%   and colimits, and thus we can consider model structures on 
% \end{example}

\begin{example} \label{semi-simplicial} The category $s_{inj}\Set$ (defined in
  Example~\ref{semi-simpl-dim}) has a core which is more complicated than the
  core of simplicial sets.  For example, if $\dim X > \dim Y$ then there are no
  morphisms $X \rto Y \in s_{inj}\Set$, and in fact dimension is a homotopy
  invariant in the generalized core model structure, since if there is a
  morphism $X \rto Y$ and a morphism $Y \rto X$ then $\dim X = \dim Y$.
  However, unlike in Example~\ref{semi-simpl-dim}, it is not the only invariant,
  as there exist $X$ and $Y$ with $\dim X = \dim Y$ but with $X$ and $Y$ not
  isomorphic in $P(s_{inj}\Set)$.
\end{example}

\begin{proof}[of Theorem~\ref{generalizedCoreModel}.]
  We define $w\C$ to be the preimage under $R_\C$ of $\iso P(\C)$, and $\tilde
  f\C$ to be the subcategory of retractions in $\C$.  These satisfy the
  conditions of Lemma~\ref{constructModel}.  Let $\CC^{core}$ be the candidate
  constructed as in Lemma~\ref{constructModel}; we will show that it satisfies
  the necessary conditions to be a model structure.  Since $\CC^{core}_{we}$
  satisfies (2OF3) by definition, we focus on the other three conditions.
  
  First, an observation: suppose that $f:A \rightarrow B$ is any morphism in
  $\C$.  Then in $\CC^{core}$, the canonical projection $p_1:A\times B \rightarrow A$
  is an acyclic fibration, and the canonical inclusion $i_1:B \rightarrow B\sqcup
  A$ is a cofibration and a weak equivalence.  The first follows trivially from
  the definition of acyclic fibration, since $f$ and $1_A$ give a morphism $A
  \rightarrow A\times B$ which is a section of $p_1$.  For the second, note that
  a canonical injection is always a cofibration as it is isomorphic to $1_B
  \sqcup (\initial \rightarrow A)$, and inclusions of the initial object are
  cofibrations by Lemma~\ref{initialTerminalLifting}.  It is a weak equivalence
  because $f$ gives a retraction $B\sqcup A \rightarrow B$.
  
  We now prove that $\tilde f\C = \CC^{core}_{we} \cap \CC^{core}_{fib}$, that
  is, that $\tilde f\C$ is exactly the acyclic fibrations.  

  We first show that $\tilde f\C\subseteq \CC^{core}_{we}\cap \CC^{core}_{fib}$.  By definition, $\tilde f\C\subseteq w\C = \CC^{core}_{we}$.  We also have
  $\tilde f\C = (\CC^{core}_{cof})^\llp \subseteq (\CC^{core}_{cof}\cap
  \CC^{core}_{we})^\llp = \CC^{core}_{fib}$, as desired.  Now let $f:A\rightarrow B \in
  \CC^{core}_{we}\cap \CC^{core}_{fib}$.  As $f\in \CC^{core}_{fib}$, it lifts on the right of $i_1:B
  \stackrel{\sim}{\hookrightarrow} B\sqcup B$. Let $b$ be any morphism $B\rightarrow A$, which exists
  since $f\in \CC^{core}_{we}$, so that we have a commutative diagram
  \[\xymatrix{
    B \ar[rr]^b\cofib[d]_{i_1}^\sim & &  A \fib[d]^{f} \\
    B\sqcup B \ar[rr]_{fb \sqcup 1_B} \ar@{-->}[urr]^{h}_\exists && B }\] 
  This diagram shows that $hi_2$ is a section of $f$, so that $f$ is a retraction
  and therefore $\tilde f\C\supseteq \CC^{core}_{we}\cap \CC^{core}_{fib}$, as desired.

  Now we need to show that $(\CC^{core}_{cof}, \tilde f\C)$ and $(\CC^{core}_{cof} \cap
  \CC^{core}_{we}, \CC^{core}_{fib})$ are WFSs.  We prove this using
  Lemma~\ref{seeWFS}.  $\CC^{core}_{we}$ is closed under retracts because $A\sim B$ is
  an equivalence relation.  $\CC^{core}_{cof}$ and $\CC^{core}_{fib}$ are closed under
  retracts because they are defined by lifing properties, and $\tilde f\C$ is
  closed under retracts because it is equal to $\CC^{core}_{we}\cap \CC^{core}_{fib}$.  Thus
  condition (3) of the lemma holds.  Condition (1) holds by definition of
  $\CC^{core}_{cof}$ and $\CC^{core}_{fib}$.  Thus to show that these are WFSs it suffices to check condition (2).

  First we factor any morphism as a cofibration followed by an acyclic
  fibration. Any morphism $f:A\rightarrow B$ factors as
  \[\xymatrix@C=3em{
    A \cofib[r]^-{i_1}& A\sqcup B \ar@{->>}[r]^-{f\sqcup 1_B}_-{\sim} & B }\] where the
  morphism $i_1$ is a canonical injection into a coproduct (and thus a
  cofibration) and $f\sqcup 1_B$ is a retraction.  This proves condition (2),
  and thus $(\CC^{core}_{cof}, \tilde f\C)$ is a WFS.

  % We begin by showing that $\CC^{core}_{cof}\cap \CC^{core}_{we}$ and $\CC^{core}_{fib}$ form a
  % maximal lifting system.  The fact that $\CC^{core}_{fib} = (\CC^{core}_{cof}\cap
  % \CC^{core}_{we})^\llp$ is true by definition, so we only need to show that
  % $\CC^{core}_{cof}\cap \CC^{core}_{we} = {}^\llp\CC^{core}_{fib}$.  Concretely, we need to show that
  % for any morphism $f:A\rightarrow B$ lifting on the left of $\CC^{core}_{fib}$, $A\sim
  % B$.  If we knew that $A\rightarrow *\in \CC^{core}_{fib}$ then
  % the lift in 
  % \[\xymatrix{ A \ar[r]^{1_A} \cofib[d]^f & A \ar[d] \\ B \ar[r] & {*}}\]
  % shows that $f$ is a section, so that $A\sim B$.  Thus all we need to
  % check is that $A\rightarrow * \in \CC^{core}_{fib}$ for all $A$.

  % Let $g:D\rightarrow E \in \CC^{core}_{cof}\cap \CC^{core}_{we}$. Consider any commutative
  % diagram
  % \[\xymatrix{
  %   D \ar[r]^d\cofib[d]_{g}^\sim & A \ar[d]^{f} \\
  %   E \ar[r] & {*} }\] 
  % As $g$ is a weak equivalence there exists a morphism $g':E \rightarrow D$, and
  % thus a morphism $E\rightarrow A$; this implies that $p_1:E\times A \rightarrow
  % E$ is an acyclic fibration.  If we factor this square as
  % \[\xymatrix{
  %   D \ar[r]^-{g\times d}\cofib[d]_{g} & E\times A \fib[d]^{p_1}_\sim \ar[r]^-{p_2} & A \ar[d]^{f} \\
  %   E \ar[r]_{1_E} \ar@{-->}[ur]^{h} & E \ar[r] & {*}.  }\]
  % the left-hand square has a lift, which gives us a lift in the original
  % square.  This finishes the proof that $(\CC^{core}_{cof}\cap \CC^{core}_{we}, \CC^{core}_{fib})$ is
  % a maximal lifting system.

  Now we factor any morphism $f:A\rightarrow B$ as an acyclic cofibration
  followed by a fibration.  In particular, we will show that the factorization
  \[\xymatrix@C=3em{
    A \cofib[r]^-{i_1}_-\sim & A\sqcup(A\times B) \ar@{->>}[r]^-{f\sqcup p_2} & B },\]
  where $i_1$ is the canonical injection and $p_2$ is the projection of the
  product on its second factor, works. By our previous analysis we know that $i_1$
  is an acyclic cofibration, so we just need to prove that $f\sqcup p_2:A \sqcup
  (A\times B) \rightarrow B$ is a fibration. Let $e:K\rightarrow L$ be any acyclic
  cofibration and consider any commutative diagram:
  \[\xymatrix{
    K \ar[r]^-k\cofib[d]_{e}^\sim & A\sqcup (A\times B) \ar[d]^{f\sqcup p_2} \\
    L \ar[r]^l & B }\] 
  In order to show that a lift exists, it suffices to show that the lift $h$
  exists in the following diagram:
  \[\xymatrix{
    K \ar[r]^-{i_1}\cofib[d]_{e}^\sim  & K\sqcup L \fib[d]^{e\sqcup 1_L} \ar[rr]^-{k\sqcup (kg\times l)}      && A\sqcup(A\times B) \ar[d]^{f \sqcup p_2} \\
    L \ar[r]^{1_L}\ar@{-->}[ur]^{h} & L \ar[rr]^l && B }\] where $g$ is any morphism from
  $L$ to $K$ (which exists because $e$ is a weak equivalence).  Note that the
  morphism $k\sqcup (kg\times l)$ is not the coproduct of two morphisms, but is
  rather the universal morphism induced by $k$ and $(kg\times l)$.  As $i_2:L
  \rto K\sqcup L$ is a section of $e\sqcup 1_L$, $e\sqcup 1_L \in \tilde f\C$,
  it lifts on the right of $e\in \CC_{cof}^{core}$.  Thus
  $(\CC^{core}_{cof}\cap \CC^{core}_{we}, \CC^{core}_{fib})$ is a WFS, as desired.

  We defer the proof of left and right properness to
  Proposition~\ref{core-proper}.
\end{proof}

Before moving on to prove properness, we need to analyze the cofibrations in
this model structure.  In general, the cofibrations in the core model structure
are very difficult to analyze; however, in the case when $\C$ has splitting and
disjoint coproducts it is possible:
\begin{prop} \label{cofibrationDecomposition} If $\C$ has splitting and disjoint
  coproducts, then any cofibration in the generalized core model structure, $c:A
  \rto B$, is isomorphic to a canonical inclusion $i_1:A \rightarrow A\sqcup X$
  for some object $X$.
\end{prop}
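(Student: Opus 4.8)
The plan is to first exhibit $c$ as a retract of the canonical inclusion $i_1:A\to A\sqcup B$, and then to use splitting and disjoint coproducts to discard the redundant part of $B$. For the retract step, recall that $\CC^{core}_{cof}={}^\llp\tilde f\C$ and that $\tilde f\C$ is exactly the class of acyclic fibrations (the retractions). The cofibration--acyclic-fibration factorization used in the proof of Theorem~\ref{generalizedCoreModel}, applied to $c$ itself, reads $c=(c\sqcup 1_B)\circ i_1$, with $i_1:A\to A\sqcup B$ a cofibration and $p:=c\sqcup 1_B:A\sqcup B\to B$ an acyclic fibration. Since $c$ is a cofibration it lifts against $p$ in the square with top edge $i_1$, left edge $c$, right edge $p$, and bottom edge $1_B$, producing $s:B\to A\sqcup B$ with $sc=i_1$ and $ps=1_B$. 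Together with $pi_1=c$ this exhibits $c$ as a retract of $i_1$.

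Next I feed the two coproduct hypotheses into this retract. Applying splitting coproducts to $s:B\to A\sqcup B$ yields $B\cong B_L\sqcup B_R$, with $s$ identified with $s_L\sqcup s_R$ for some $s_L:B_L\to A$ and $s_R:B_R\to B$. Writing $\kappa_1,\kappa_2$ for the inclusions of $B_L,B_R$ into $B$, restricting $ps=1_B$ to the two summands and using $pi_1=c$, $pi_2=1_B$ gives $c\,s_L=\kappa_1$ and $s_R=\kappa_2$. I then apply splitting coproducts again, now to $c:A\to B\cong B_L\sqcup B_R$, to obtain $A\cong A_L\sqcup A_R$ and $c$ identified with $c_L\sqcup c_R$, where $c_L:A_L\to B_L$, $c_R:A_R\to B_R$; let $\lambda_1,\lambda_2$ denote the inclusions of $A_L,A_R$ into $A$.

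The crux is to prove $A_R\cong\initial$ and that $c_L$ is an isomorphism, by unwinding $sc=i_1$ on each summand of $A$. On the $A_R$ summand, $sc\,\lambda_2$ equals both $i_1\lambda_2$ and $i_2 s_R c_R=i_2\kappa_2 c_R$, so a single morphism $A_R\to A\sqcup B$ factors through both injections $i_1$ and $i_2$; since $\C$ has disjoint coproducts the pullback of $i_1$ along $i_2$ is $\initial$, and the monicity of $\lambda_2$ then forces $A_R\cong\initial$, exactly as in Lemma~\ref{lem:no-trans}. Hence $\lambda_1$ is an isomorphism and $A\cong A_L$. On the $A_L$ summand, $sc\,\lambda_1=i_1\lambda_1$ becomes $i_1 s_L c_L=i_1\lambda_1$, so $s_L c_L=\lambda_1$ by monicity of $i_1$; combined with $c\,s_L=\kappa_1$ and the monicity of $\kappa_1$, this makes $c_L:A_L\to B_L$ an isomorphism (with inverse induced by $s_L$). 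Therefore $A\cong A_L\cong B_L$, and under the identifications $B\cong B_L\sqcup B_R\cong A\sqcup B_R$ the morphism $c$ becomes the canonical inclusion; taking $X=B_R$ completes the proof. I expect the only genuinely delicate point to be the passage $A_R\cong\initial$: extracting an actual isomorphism rather than a mere morphism $A_R\to\initial$ relies on the strictness of $\initial$ supplied by disjointness (monicity of the injections), while the remainder is bookkeeping of the isomorphisms introduced by splitting coproducts.
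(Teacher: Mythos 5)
Your proof is correct and takes essentially the same route as the paper's: the identical lifting square producing $s$ (the paper's $h$), the same two applications of splitting coproducts to $s$ and to $c$, disjointness to force $A_R \cong \initial$, and monicity of the coproduct injections to conclude that $c_L$ is an isomorphism. The only difference is cosmetic: you run the pullback/monicity argument for $A_R \cong \initial$ directly (via the universal property and monicity of $\lambda_2$) rather than quoting Lemma~\ref{lem:no-trans} verbatim, which is if anything slightly more careful about which coproduct the relevant maps land in.
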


\begin{proof}
  The square 
  \[\xymatrix{A \ar[r]^-{i_1} \cofib[d]_c & A \sqcup B \fib[d]^{c \sqcup 1_B}\\
    B \ar[r]^{1_B} \ar@{-->}[ru]^-h & B }\]
  commutes.
  As $\C$ has splitting coproducts, we can write $h = h_L \sqcup h_R$ with $h_L:
  B_L \rightarrow A$ and $h_R:B_R \rightarrow B$.  Thus $c:A \rightarrow B_L
  \sqcup B_R$, so we can again use splitting to write $c = c_L \sqcup c_R$.  We
  can then rewrite the above diagram as follows:
  \[\xymatrix{A_L\sqcup A_R \ar[rr]^-{i_1} \ar[d]_{c_L\sqcup c_R} && A \sqcup B \ar[d]^{c \sqcup 1_B}\\ %
    B_L\sqcup B_R \ar[rr]^-{\cong} \ar@{-->}[rru]^-{h_L \sqcup h_R} && B }\]
  By considering the restriction to $A_R$ we get that the following diagram
  commutes:
  \[\xymatrix{A_R \ar[d]_{c_R} \ar[rr]^-{i_1} && A_R\sqcup B \\ B_R \ar[r]^{h_R} & B \ar[ru]_{i_2}
  }\]
  Thus $h_Rc_R$ satisfies the conditions of Lemma~\ref{lem:no-trans} and we
  conclude that $A_R = \initial$ and $A_L \cong A$.  Now consider the
  restriction to $A_L$; we get the following diagram:
  \[\xymatrix{A_L \ar[r]^{i_1} \ar[d]_{c_L} & A \ar[d]^c  \\ 
    B_L \ar[r]^{i_1} \ar@{-->}[ru]^{h_L} & B 
  }\]
  As $A_L = A$ we know that $c$ factors through $i_1:B_L \rightarrow B$ as
  $i_1c_L$.  The upper triangle says that $h_Lc_L = 1_A$, and the lower triangle
  and the fact that $c$ factors through $i_1$ says that $i_1c_Lh_L = i_1$; as
  $i_1$ is monic, $c_Lh_L = 1_{B_L}$ and we see that $c_L$ is an isomorphism.
  So we are done.
\end{proof}

We can now prove that the generalized core model structure is left proper and
right proper.

\begin{prop} \label{core-proper}
  In a category with splitting and disjoint coproducts the generalized core
  model structure is left proper and right proper.
\end{prop}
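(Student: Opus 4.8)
The plan is to treat left and right properness separately, since the generalized core model structure is not self-dual and there is no formal way to deduce one from the other. Throughout I will use the fact, recorded in Theorem~\ref{generalizedCoreModel}, that a morphism $f:A\to B$ is a weak equivalence precisely when $A\sim B$; thus being a weak equivalence depends only on the source and target, not on $f$ itself, and to prove that a given base-change of a weak equivalence is again one it suffices to exhibit morphisms in both directions between its source and target.

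For left properness, consider a pushout square with $g:A\to C$ a weak equivalence and $i:A\to B$ a cofibration, so that I must show the induced map $\bar g:B\to B\cup_A C$ is a weak equivalence. The key simplification is Proposition~\ref{cofibrationDecomposition}: because $\C$ has splitting and disjoint coproducts, $i$ is isomorphic to a canonical inclusion $i_1:A\to A\sqcup X$. A direct check with the universal property of the pushout then identifies $B\cup_A C$ with $C\sqcup X$ and the map $\bar g$ with $g\sqcup 1_X:A\sqcup X\to C\sqcup X$. Finally, since $g$ is a weak equivalence there are morphisms $A\to C$ and $C\to A$; coproducting each with $1_X$ gives morphisms $A\sqcup X\to C\sqcup X$ and $C\sqcup X\to A\sqcup X$, whence $A\sqcup X\sim C\sqcup X$ and $\bar g$ is a weak equivalence.

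For right properness, consider a pullback square with $g:A\to D$ a weak equivalence and $p:B\to D$ a fibration; writing $P=A\times_D B$ with projections $\pi_A,\pi_B$, I must show $\pi_B:P\to B$ is a weak equivalence, i.e.\ that $P\sim B$. One direction is free, as $\pi_B$ is already a morphism $P\to B$; the work is to produce a morphism $B\to P$. Since $g$ is a weak equivalence there is a morphism $h:D\to A$, and I put $s=hp:B\to A$. A morphism $B\to P$ is exactly a pair $(s,t)$ with $t:B\to B$ and $gs=pt$, so it remains to find $t$ with $pt=ghp$. Here I exploit that $p$ is a fibration: the canonical inclusion $i_1:B\to B\sqcup B$ is an acyclic cofibration (a cofibration as a canonical inclusion, established in the proof of Theorem~\ref{generalizedCoreModel}, and a weak equivalence since $B\sim B\sqcup B$ via $i_1$ and the fold map). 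Hence the lifting square with left edge $i_1$, right edge $p$, top edge $1_B:B\to B$, and bottom edge $(p,ghp):B\sqcup B\to D$ admits a lift $\ell:B\sqcup B\to B$; taking $t=\ell i_2$ gives $pt=ghp$ as required. Then $(s,t):B\to P$ together with $\pi_B$ witnesses $P\sim B$.

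The main obstacle is precisely the right-properness argument. Unlike cofibrations, fibrations in this structure admit no convenient normal form analogous to Proposition~\ref{cofibrationDecomposition}, so the explicit pushout computation used for left properness has no dual. The crux is to replace it with a purely lifting-theoretic move: choosing the acyclic cofibration $B\to B\sqcup B$ so that the fibration property of $p$ supplies the missing map $t$, and hence the section-like morphism $B\to P$. Once this lift is in hand the remainder is formal, and I expect the verifications of the pushout identification in the left-properness step and of the commutativity of the lifting square to be routine.
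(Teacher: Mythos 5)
Your proof is correct. The left-properness half is essentially identical to the paper's argument: reduce via Proposition~\ref{cofibrationDecomposition} to a canonical inclusion $i_1:A\to A\sqcup X$, identify the pushout map with $g\sqcup 1_X$, and obtain the reverse morphism as $k\sqcup 1_X$ for some $k:C\to A$ supplied by the weak equivalence. Your right-properness argument, however, takes a genuinely different and more economical route. The paper first factors the weak equivalence as an acyclic cofibration followed by an acyclic fibration and uses pullback-stability of acyclic fibrations (the right class of a WFS) to reduce to pulling back an acyclic cofibration along a fibration; it then applies Proposition~\ref{cofibrationDecomposition} to that acyclic cofibration, uses splitting coproducts to write the fibration as $f_L\sqcup f_R$, performs a lifting argument, splits again, and finally invokes disjointness of coproducts (the pullback of $i_1$ and $i_2$ is $\initial$) to finish. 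You bypass all of this by building a morphism $B\to A\times_D B$ directly: lifting $(p,ghp):B\sqcup B\to D$ against the acyclic cofibration $i_1:B\to B\sqcup B$ yields $t$ with $pt=ghp$, and the pair $(hp,t)$ maps $B$ into the pullback; the underlying observation is that for any fibration $p:B\to D$ in this model structure, every morphism $B\to D$ factors through $p$ via an endomorphism of $B$. Notably, your argument for right properness uses neither splitting nor disjoint coproducts---only bicompleteness and the fact, established in the proof of Theorem~\ref{generalizedCoreModel}, that canonical injections such as $i_1:B\to B\sqcup B$ are acyclic cofibrations---so it in fact shows that the generalized core model structure is right proper on \emph{every} bicomplete category; the coproduct hypotheses remain needed only for the left-properness half, where Proposition~\ref{cofibrationDecomposition} is used. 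What the paper's longer route buys is some structural insight into how fibrations decompose over coproducts, but as a proof of right properness yours is both simpler and strictly more general.
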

\begin{proof}
  We first need to prove left properness: that the pushout of a weak equivalence
  along a cofibration is a weak equivalence.  By
  Proposition~\ref{cofibrationDecomposition}, we can assume that the cofibration
  is a canonical inclusion $i_1:A\rightarrow A\sqcup C$ and that the weak
  equivalence is $w:A \rightarrow B$; then we have a pushout square
  \begin{equation*}
    \xymatrix{
      A \ar[r]^-{i_1}\ar[d]_{w} & A\sqcup C\ar[d]^{w\sqcup 1_C}\\
      B \ar[r]^-{i_1} & B\sqcup C  
    }
  \end{equation*}
  We want to show that $w\sqcup 1_C$ is a weak equivalence, or in other words
  that there is a morphism $B\sqcup C \rto A\sqcup C$.  As $w$ is a weak
  equivalence there exists a morphism $f:B \rto A$; then $f\sqcup 1_C$ is the
  desired morphism, and we are done.  

  We now consider right properness.  In any model category we can factor a weak
  equivalence as an acyclic cofibration followed by an acyclic fibration.  We
  know that acyclic fibrations are preserved by pullbacks, so in order to show
  right properness it suffices to show that the pullback of an acyclic
  cofibration along a fibration is a weak equivalence.

  By Proposition~\ref{cofibrationDecomposition} we can assume that our
  cofibration is a canonical injection $i_1:A\rightarrow A\sqcup B$. Let $f:C
  \rightarrow A\sqcup B$ be the fibration along which we want to take a
  pullback. By splitting of coproducts, we can write $f=f_L\sqcup f_R$ with
  $f_L:C_L\rightarrow A$ and $f_R:C_R\rightarrow B$. Let $D$ be the pullback of
  our two morphisms, so that we have a diagram
  \[\xymatrix{D \ar[r] \fib[d] & C_L \sqcup C_R \fib[d]^{f_L\sqcup f_R} \\ A \cofib[r]_-{i_1}^-\sim & A\sqcup B 
  }\]
  We want to show that there exists a morphism $g:C_L\sqcup C_R \rightarrow D$.
  Suppose that there exists a morphism $g':C_R \rightarrow C_L$.  Then the
  commutative diagram
  \[\xymatrix@C=4em{C_L \sqcup C_R \ar[d]_{f_L\sqcup g'} \ar[r]^-{1_{C_L}\sqcup f_Lg'} &
    C_L \ar[r]^-{i_1} & C_L \sqcup C_R \ar[d]^{f_L\sqcup f_R} \\ A
    \cofib[rr]_-{i_1}^-\sim && A\sqcup B }\] shows that the morphism $C_L\sqcup
  C_R \rightarrow D$ exists, as desired.  Thus all that we have left to show is
  that $g'$ exists.

  Since $i_1$ is a weak equivalence there exists a morphism $r:B \rto A$.  We
  consider the following diagram, where $h$ exists because $f_L\sqcup f_R$ is a
  fibration and $C_R \rightarrow C_R\sqcup C_R$ is an acyclic cofibration:
  \[\xymatrix@C=5em{
    C_R \ar[r]^{i_2}\cofib[d]_{i_2}^\sim & C_L\sqcup C_R \fib[d]^{f_L\sqcup f_R} \\
    C_R\sqcup C_R \ar[r]_-{(rf_R)\sqcup f_R} \ar@{-->}[ur]^{h} & A\sqcup B }\]
  As we have splitting coproducts, we can write $hi_1: C_R \rto C_L\sqcup C_R$
  as a coproduct of $h_L: X \rto C_L$ and $h_R:Y \rto C_R$.  If we can show that
  there exists a morphism $Y\rto C_L$ we will be done, as $C_R\cong X\sqcup Y$.
  We have a commutative square
  \[\xymatrix{Y \ar[r]^{f_Rh_R} \ar[d]_{rf_Ri_2} & B \ar[d]^{i_2} \\ A \ar[r]^-{i_1} &
    A \sqcup B
  }\]
  As $\C$ has disjoint coproducts the pullback of $i_1$ and $i_2$ is $\initial$;
  thus we have a morphism $Y \rto \initial \rto C_L$ and we are done.
\end{proof}

\subsection*{}

At the beginning of this section we made the choice of setting the acyclic
fibrations to be the retractions.  Instead, we could have taken the dual
definition, and constructed a model structure where the acyclic cofibrations are
the sections:

\begin{thm}
  There is a model structure $\CC^{cocore}$ on $\C$ where $f:A\rightarrow B$ is
  a weak equivalence exactly when $A \sim B$ and the acyclic cofibrations are
  the sections.  If $\C$ has splitting and disjoint coproducts then this model
  structure is right proper; if in addition binary coproducts distribute over
  binary products then it is left proper.
\end{thm}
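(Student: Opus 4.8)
The plan is to obtain the model structure itself by dualizing Theorem~\ref{generalizedCoreModel}, and then to prove the two properness statements directly, since the hypotheses on coproducts are \emph{not} carried to hypotheses on coproducts under $(-)^{op}$.

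\emph{Existence.} I would apply Theorem~\ref{generalizedCoreModel} to $\C^{op}$, which is again bicomplete. Since $P(\C^{op}) = P(\C)^{op}$, the relation $\sim$ is preserved by $(-)^{op}$, so $R_{\C^{op}}^{-1}(\iso P(\C^{op}))$ corresponds to $R_\C^{-1}(\iso P(\C))$; moreover a morphism is a retraction in $\C^{op}$ exactly when it is a section in $\C$. As the axioms for a model structure are self-dual, the opposite of the core model structure on $\C^{op}$ is a model structure $\CC^{cocore}$ on $\C$ whose weak equivalences are the $f:A\to B$ with $A\sim B$ and whose acyclic cofibrations are exactly the sections. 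This already yields the first two sentences of the theorem. It also records the two facts I will reuse: the fold $i_1:E\to E\sqcup E$ is an acyclic cofibration (a section of the codiagonal, and $E\sim E\sqcup E$), and each projection $p_1:X\times Y\to X$ is a fibration --- directly, a square under a section $\sigma$ with retraction $\tau$ and top $u=(u_1,u_2)$ is filled by $(v,u_2\tau)$ --- which is acyclic whenever there is a morphism into $Y$; in particular $p_1:B\times B\to B$ is always an acyclic fibration via the diagonal.

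\emph{Right properness.} Because a morphism is a weak equivalence exactly when its two endpoints are $\sim$-related, it suffices, for a fibration $p:E\to B$ and a weak equivalence $w:A\to B$, to exhibit a single morphism $E\to E\times_B A$; the pullback projection supplies the reverse comparison and forces $E\times_B A\sim E$. Choosing $r:B\to A$ (which exists as $A\sim B$), I lift $p$ against the acyclic cofibration $i_1:E\to E\sqcup E$ in the square with top $1_E$ and bottom the map $E\sqcup E\to B$ with components $p$ and $wrp$. Writing $h$ for the lift, the pair $(hi_2,\,rp):E\to E\times_B A$ is well defined because $p\cdot hi_2 = wrp = w\cdot rp$.

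\emph{Left properness.} This is the step I expect to be the real obstacle: it is the one assertion that does not follow formally from the duality, and it is where the distributivity hypothesis is meant to enter. Given a cofibration $c:A\to B$ and a weak equivalence $w:A\to C$, I again need only one morphism $B\cup_A C\to B$, i.e.\ maps $\beta:B\to B$ and $\gamma:C\to B$ with $\beta c=\gamma w$. Taking $r:C\to A$ and $\gamma=cr$ reduces the problem to solving $\beta c = crw$, which is precisely a lift of the cofibration $c$ against the acyclic fibration $p_1:B\times B\to B$ in the square with top $(c,crw):A\to B\times B$ and bottom $1_B$; then $\beta$ is the second component of the lift and $(\beta,cr):B\cup_A C\to B$ witnesses $B\sim B\cup_A C$. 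The point I would scrutinize most carefully is exactly how much structure this lifting argument truly consumes: if one avoids it and instead builds the comparison morphism by hand from a splitting of the pushout, the step forcing one to rewrite a product of $B$ against the coproduct defining $B\cup_A C$ is the one that the distributivity of binary coproducts over binary products is there to license.
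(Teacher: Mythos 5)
Your proof is correct, but the properness arguments take a genuinely different route from the paper's. The existence step is identical: the paper also obtains $\CC^{cocore}$ by dualizing Theorem~\ref{generalizedCoreModel}, with acyclic cofibrations the sections because the core structure's acyclic fibrations are exactly the retractions. For properness, however, the paper argues structurally: right properness is deduced from the identity Quillen equivalence between the core and cocore structures (Proposition~\ref{core-cocore-equiv} and Corollary~\ref{cocore-right}), so that the pullback can be tested in the core structure, whose right properness (Proposition~\ref{core-proper}) rests on the characterization of cofibrations as coproduct injections (Proposition~\ref{cofibrationDecomposition}) --- this is where splitting and disjoint coproducts enter; left properness (Proposition~\ref{cocore-left}) is proved by decomposing an arbitrary cocore cofibration as a section followed by a canonical inclusion, and distributivity is used precisely to manufacture the acyclic fibration $(A\times B)\sqcup B \to B$ needed for that decomposition. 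You instead solve a single lifting problem in each case and write down the comparison morphism directly: against the section $i_1:E\to E\sqcup E$ to produce $E\to E\times_B A$, and against $p_1:B\times B\to B$ to produce $[\beta,cr]:B\cup_A C\to B$. Both squares commute, both lifts exist by the defining lifting properties of the cocore structure (your direct check that projections lie in $(\mathrm{sections})^{\llp}$, filled by $(v,u_2\tau)$, is right, and $p_1:B\times B\to B$ is acyclic via the diagonal), and since a morphism here is a weak equivalence exactly when its source and target admit morphisms in both directions, one morphism back is all that is needed. Notably, your arguments consume none of the coproduct hypotheses: they prove the stronger statement that $\CC^{cocore}$ is left and right proper on any bicomplete $\C$ (and, dually, so is $\CC^{core}$). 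So the worry in your final sentence is unfounded --- no splitting of the pushout and no distributivity are ever needed in your version. What the paper's longer route buys is the structural description of cofibrations, which is of independent interest and reused elsewhere; what yours buys is brevity and the elimination of all hypotheses from the properness assertions.
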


\begin{proof} 
  The proof that the model structure exists follows by duality from the proof of
  Theorem~\ref{generalizedCoreModel}.  We defer the proof of properness to
  Corollary~\ref{cocore-right} and Proposition~\ref{cocore-left}.
\end{proof}

We call this model structure the {\it generalized cocore model structure}.
Mo\-ral\-ly speaking,  the generalized core and the generalized cocore model
structures should be Quillen equivalent, although we do not know
how to prove this in full generality.  In the case when $\C$ has splitting and
disjoint coproducts, however, this does turn out to be the case:

\begin{prop} \label{core-cocore-equiv} If $\C$ has splitting and disjoint
  coproducts, then the identity functor is a left Quillen equivalence from the
  generalized core model structure to the generalized cocore model structure.
\end{prop}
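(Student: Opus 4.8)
The plan is to exploit the fact that $\CC^{core}$ and $\CC^{cocore}$ have exactly the same weak equivalences, namely $w\C=\{f:A\to B\mid A\sim B\}$. For two model structures on $\C$ with a common class of weak equivalences, the identity is left adjoint to itself, and once it is shown to be a left Quillen functor it is \emph{automatically} a Quillen equivalence: for cofibrant $X$ and fibrant $Y$ a morphism $X\to Y$ and its adjunct are literally the same map, so it is a $\CC^{cocore}$-weak equivalence iff it is a $\CC^{core}$-weak equivalence. Thus the whole content reduces to checking that the right adjoint (again the identity) carries $\CC^{cocore}$-fibrations into $\CC^{core}$-fibrations and $\CC^{cocore}$-acyclic fibrations into $\CC^{core}$-acyclic fibrations; this is equivalent to the identity being left Quillen from $\CC^{core}$ to $\CC^{cocore}$.

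For the fibrations, I would first show that every acyclic cofibration of $\CC^{core}$ is a section. By Proposition~\ref{cofibrationDecomposition} such a map is isomorphic to a canonical inclusion $i_1:A\to A\sqcup X$, and since it is a weak equivalence we have $A\sim A\sqcup X$, so there is a morphism $\phi:A\sqcup X\to A$; the induced map $A\sqcup X\to A$ which is $1_A$ on $A$ and $\phi i_2$ on $X$ then retracts $i_1$, and sections are stable under isomorphism in the arrow category. Because the acyclic cofibrations of $\CC^{cocore}$ are precisely the sections, any $\CC^{cocore}$-fibration has the right lifting property against all sections, in particular against every $\CC^{core}$-acyclic cofibration, and is therefore a $\CC^{core}$-fibration.

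For the acyclic fibrations, I would show that every $\CC^{cocore}$-acyclic fibration $p:E\to Y$ is a retraction, since the $\CC^{core}$-acyclic fibrations are exactly the retractions by Theorem~\ref{generalizedCoreModel}. As $p$ is a weak equivalence there is a morphism $q:Y\to E$, so the map $(1_E,q):E\sqcup Y\to E$ exhibits $i_1:E\to E\sqcup Y$ as a section; since $p$ is a $\CC^{cocore}$-fibration it lifts on the right against this section, so in the commutative square with top $1_E$, left $i_1$, right $p$, and bottom the induced map $E\sqcup Y\to Y$ which is $p$ on $E$ and $1_Y$ on $Y$, there is a lift $\ell$ whose restriction $\ell i_2:Y\to E$ satisfies $p\,\ell i_2=1_Y$. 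Hence $p$ is a retraction. Combining the two claims makes the identity a left Quillen functor, and the observation about shared weak equivalences upgrades it to a Quillen equivalence. The one delicate point, and the main obstacle, is the claim that $\CC^{core}$-acyclic cofibrations are sections: this genuinely needs the structural decomposition of Proposition~\ref{cofibrationDecomposition}, and hence the splitting and disjoint coproduct hypotheses, whereas everything else is formal lifting-property bookkeeping.
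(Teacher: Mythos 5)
Your proof is correct and follows essentially the same route as the paper: the key step in both is to use Proposition~\ref{cofibrationDecomposition} to show that the acyclic cofibrations of $\CC^{core}$ are sections (hence acyclic cofibrations of $\CC^{cocore}$), which gives $\CC^{cocore}_{fib}\subseteq\CC^{core}_{fib}$ by lifting. Your separate direct argument that $\CC^{cocore}$-acyclic fibrations are retractions is correct but redundant, since the two structures share weak equivalences it follows formally from the fibration containment via $\CC^{cocore}_{fib}\cap w\C\subseteq\CC^{core}_{fib}\cap w\C$, which is why the paper omits it.
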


\begin{proof}
  We know that $\CC_{we}^{core} = \CC_{we}^{cocore}$, so it suffices to show
  that $\CC^{core}_{fib} \supseteq \CC^{cocore}_{fib}$.  Equivalently, it
  suffices to show that $\CC^{core}_{cof} \cap \CC^{core}_{we} \subseteq
  \CC^{cocore}_{cof} \cap \CC^{cocore}_{we}$.

  In the generalized cocore model structure the acyclic cofibrations are
  sections.  In the generalized core model structure the acyclic cofibrations
  are those morphisms $f:A \rightarrow A \sqcup B$ for which a morphism $g:B
  \rightarrow A$ exists.  If such a morphism exists then the induced morphism
  $1_A \sqcup g$ is clearly a retraction for $f$, so all acyclic cofibrations in
  the generalized core model structure have retractions.  Thus all acyclic
  cofibrations in the core model structure are also acyclic cofibrations in the
  cocore model structure, as desired.
\end{proof}

Right properness of the generalized cocore model structure follows directly from
this proposition.

\begin{corollary} \label{cocore-right}
  If $\C$ has splitting and disjoint coproducts then the generalized cocore
  model structure is right proper.
\end{corollary}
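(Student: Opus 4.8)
The plan is to transfer right properness from the generalized core model structure, which was already shown to be right proper in Proposition~\ref{core-proper}, to the generalized cocore model structure via the comparison in Proposition~\ref{core-cocore-equiv}. Two facts make this transfer possible. First, the two model structures have the same weak equivalences: $\CC^{core}_{we} = \CC^{cocore}_{we}$, since in both cases $f:A\rightarrow B$ is a weak equivalence exactly when $A\sim B$. Second, the proof of Proposition~\ref{core-cocore-equiv} (which exhibits the identity as a left Quillen functor $\CC^{core}\rightarrow\CC^{cocore}$, so that its right adjoint preserves fibrations) establishes the containment $\CC^{cocore}_{fib}\subseteq\CC^{core}_{fib}$; that is, every cocore fibration is a core fibration.

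With these two observations the argument is immediate. Right properness is the assertion that the pullback of a weak equivalence along a fibration is again a weak equivalence, and this statement refers only to the classes of weak equivalences and fibrations together with the ambient categorical pullbacks, which are computed in $\C$ independently of any model structure. So I would take an arbitrary cocore fibration $p$ and an arbitrary cocore weak equivalence $w$, and form the pullback of $w$ along $p$. By the containment above, $p$ is also a core fibration, and since the weak equivalences coincide, $w$ is a core weak equivalence. Hence this square is an instance of pulling back a core weak equivalence along a core fibration, and by right properness of the generalized core model structure the resulting morphism lies in $\CC^{core}_{we} = \CC^{cocore}_{we}$. It is therefore a cocore weak equivalence, which is precisely what right properness of the cocore structure requires.

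There is essentially no obstacle beyond correctly orienting the fibration containment: because the cocore fibrations form a \emph{subclass} of the core fibrations, every right-properness test square for the cocore structure is automatically a valid test square for the (already right proper) core structure, and the shared class of weak equivalences lets us read the conclusion back across. The only point deserving a moment's care is to note that right properness is phrased purely in terms of weak equivalences, fibrations, and pullbacks, so that nothing about the cofibrations or the specific factorizations of either structure enters; this is what prevents the usual failure of right properness to be invariant under Quillen equivalence from causing trouble here.
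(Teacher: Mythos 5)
Your proof is correct and follows essentially the same route as the paper: the paper likewise invokes Proposition~\ref{core-cocore-equiv} to view the identity as a right Quillen functor from the cocore to the core structure (so cocore fibrations are core fibrations), notes the two structures share weak equivalences, and concludes by Proposition~\ref{core-proper}. Your write-up simply makes explicit the fibration containment and the observation that right properness only involves weak equivalences, fibrations, and pullbacks, which the paper leaves implicit.
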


\begin{proof}
  By Proposition~\ref{core-cocore-equiv} we know that the identity functor is a
  right Quillen equivalence from the generalized cocore model structure to the
  generalized core model structure; as the two structures have the same weak
  equivalences it suffices to show that the pullback is a weak equivalence in
  the generalized core model structure.  This follows from
  Proposition~\ref{core-proper}.
\end{proof}

To finish the discussion of the cocore model structure, we would like to show
that the generalized cocore model structure is left proper.  However, because of
the way we defined the acyclic fibrations, it turns out to be very difficult to
do so in general.  By introducing a further assumption we get the following
result.

\begin{prop} \label{cocore-left} If $\C$ has splitting and disjoint coproducts
  and, moreover, if binary products distribute over binary coproducts, the
  generalized cocore model structure is left proper.
\end{prop}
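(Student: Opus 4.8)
The plan is to mirror, on the fibration side, the reduction used for right properness in Proposition~\ref{core-proper}. Left properness asks that the pushout $w'\colon B\to C\cup_A B$ of a weak equivalence $w\colon A\to C$ along a cofibration $i\colon A\to B$ be a weak equivalence. Factor $w$ as an acyclic cofibration $j\colon A\to E$ followed by an acyclic fibration $p\colon E\to C$, and stack the two pushouts: the pushout of $j$ along $i$ is again an acyclic cofibration (the left class of the WFS $(\CC^{cocore}_{cof}\cap\CC^{cocore}_{we},\CC^{cocore}_{fib})$ is closed under pushout by Theorem~\ref{wfs}), hence a weak equivalence, so by (2OF3) it suffices to prove that the pushout of the acyclic fibration $p$ along the induced cofibration $E\to B\cup_A E$ is a weak equivalence. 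Thus the whole problem reduces to the following claim: \emph{the pushout $q\colon W\to P'$ of an acyclic fibration $p\colon E\to C'$ along any cofibration $c\colon E\to W$, with $P'=C'\sqcup_E W$, is a weak equivalence.}

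To prove the reduced claim I will produce a morphism $P'\to W$; together with $q$ this gives $W\sim P'$, so $q$ is a weak equivalence. By the universal property of the pushout, a morphism $P'\to W$ is the same as a pair $(\phi\colon C'\to W,\ \psi\colon W\to W)$ with $\phi p=\psi c$. The device is an auxiliary acyclic fibration: the canonical projection $\rho\colon W\times(W\sqcup C')\to W$ is a fibration (it is isomorphic to $1_W\times(W\sqcup C'\to *)$, and maps to $*$ are fibrations by Lemma~\ref{initialTerminalLifting}, while $\RR$ is closed under products by Theorem~\ref{wfs}), and it is a weak equivalence because $\langle 1_W,i_1\rangle\colon W\to W\times(W\sqcup C')$ witnesses $W\times(W\sqcup C')\sim W$. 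Since $c$ is a cofibration it lifts on the left of the acyclic fibration $\rho$:
\[\xymatrix{ E \ar[rr]^-{\langle c,\, i_2 p\rangle} \cofib[d]_c && W\times(W\sqcup C') \fib[d]^{\rho} \\ W \ar[rr]^-{1_W} \ar@{-->}[urr]^-{L} && W }\]
The square commutes because $\rho\langle c,i_2p\rangle=c=1_W c$. As $\rho L=1_W$ forces $L=\langle 1_W,\lambda\rangle$, the upper triangle yields a morphism $\lambda\colon W\to W\sqcup C'$ with $\lambda c=i_2 p$.

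It remains to assemble $(\phi,\psi)$. Because $p$ is a weak equivalence there is some $b\colon C'\to E$; set $\phi=cb\colon C'\to W$, let $\mu\colon W\sqcup C'\to W$ be the morphism that is $1_W$ on $W$ and $cb$ on $C'$, and put $\psi=\mu\lambda$. Then $\psi c=\mu\lambda c=\mu i_2 p=(cb)p=\phi p$, so $(\phi,\psi)$ induces the desired $P'\to W$, completing the reduced claim and hence left properness. I expect the main obstacle to be exactly the choice of this auxiliary acyclic fibration $W\times(W\sqcup C')\to W$: the naive attempt to split the cofibration $c$ as a product (the literal dual of the right–properness argument of Proposition~\ref{core-proper}) would require splitting and disjointness of \emph{products}, which are not available, whereas routing the comparison map through $W\sqcup C'$ sidesteps this entirely. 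Indeed, this argument uses only that $\C$ is bicomplete and seems not to need the splitting, disjointness, or distributivity hypotheses; I would flag that the stated assumptions appear inessential for left properness, keeping them only insofar as the surrounding development (Propositions~\ref{cofibrationDecomposition} and~\ref{core-cocore-equiv}) relies on them.
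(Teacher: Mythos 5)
Your proof is correct, but it takes a genuinely different route from the paper's, and in fact proves more. The paper argues structurally: using distributivity together with splitting and disjoint coproducts, it shows that every cofibration in the cocore structure factors as a section followed by a canonical inclusion (by lifting $c$ against the acyclic fibration $p_2\sqcup 1_B:(A\times B)\sqcup B\rightarrow B$ and splitting the lift as in Proposition~\ref{cofibrationDecomposition}), and then verifies left properness separately for pushouts along sections and along canonical inclusions. You instead perform the standard reduction (dual to the paper's own reduction for right properness in Proposition~\ref{core-proper}) to pushouts of acyclic fibrations along cofibrations, and then manufacture the needed comparison map $P'\rightarrow W$ by lifting $c$ against the auxiliary acyclic fibration $W\times(W\sqcup C')\rightarrow W$; this is a fibration because acyclic cofibrations are sections (so projections are fibrations, as the paper itself observes), and a weak equivalence because $\langle 1_W,i_1\rangle$ supplies the map back. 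I checked the details: the lift gives $\lambda$ with $\lambda c=i_2p$, and composing with the morphism $W\sqcup C'\rightarrow W$ that is $1_W$ on $W$ and $cb$ on $C'$ produces the pair $(\phi,\psi)$ with $\phi p=\psi c$, so $W\sim P'$ and the pushout is a weak equivalence. What the paper's route buys is structural information of independent interest (the section-plus-inclusion factorization of cofibrations); what your route buys is generality, and your closing observation is right: your argument uses only bicompleteness, so the splitting, disjointness, and distributivity hypotheses are not needed for left properness of the cocore structure --- and, dually, the core structure is right proper on any bicomplete category, strengthening that half of Proposition~\ref{core-proper} as well. This improves on the paper, which states that proving left properness in general seemed very difficult and introduces the distributivity assumption only for this purpose. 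As a minor simplification, you could skip the detour through $W\sqcup C'$: lift $\langle c, cbp\rangle$ against the projection $W\times W\rightarrow W$, which is an acyclic fibration via the diagonal, and take $\psi$ to be the second component of the lift.
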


\begin{proof}
  Let $c:A\rightarrow B$ be a cofibration. Since in our model structure the
  acyclic cofibrations are the sections, the projections $C\times D\rightarrow
  C$ are fibrations. In particular, $p_2 :(A \sqcup *)\times B\rightarrow B$ is
  a fibration.  However, as products distribute over coproducts we know that
  $(A\sqcup *)\times B \cong (A\times B) \sqcup B$, so there exists a morphism
  $B \rto (A\sqcup *)\times B$.  Thus the morphism $p_2\sqcup 1_B:(A\times B)\sqcup B
  \rightarrow B$ is an acyclic fibration.

  We consider the following commutative diagram and deduce the existence of a
  lifting morphism $h$. 
  \[\xymatrix@C=5em{
    A \cofib[d]_{c}\ar[r]^-{i_1\circ(1_A\times c)} & (A\times B)\sqcup B \fib[d]^{p_2\sqcup 1_B}_\sim \\
    B \ar[r]^{1_B}\ar@{-->}[ur]^{h} & B  
  }\]
  By applying the logic used in Proposition~\ref{cofibrationDecomposition}, we
  see that this diagram is induced from two diagrams
  \[\xymatrix@C=4em{A \ar[r]^-{1_A\times c} \cofib[d]_{c_L} & A\times B
    \fib[d]^{p_2} & & \initial \ar[r] \ar[d] & B \ar[d]^{1_B} \\ 
    B_L \ar[r]^{i_1} \ar@{-->}[ru]^{h_L} & B && B_R  \ar[r]^{i_2} \ar@{-->}[ru]^{h_R}& B
  }\]
  Note that $p_1h_Lc_L = p_1 (1_A\times c) = 1_A$, so $c_L$ is a section.  Thus
  any cofibration is a composition of a section and a canonical inclusion.
  
  Thus it suffices to show that the pushout of a weak equivalence along a
  canonical inclusion or a section is still a weak equivalence.  The pushout of
  a weak equivalence $f:A \rightarrow C$ along a canonical inclusion $i_1:A
  \rightarrow A\sqcup B$ is just $f\sqcup 1_B: A\sqcup B \rightarrow C\sqcup B$,
  which is clearly also a weak equivalence.  The pushout of a section is another
  section, and as all sections are weak equivalences by (2OF3) the pushout of a
  weak equivalence along a section is another weak equivalence, as desired.  So
  we are done.
\end{proof}

%TODO + coproduct notation for one and two targets
%\begin{lem}
%\label{monicity_natural_injections}
%An natural injection which is also a weak equivalence is monic.
%\end{lem}

%\begin{lem}
%\label{pullback_natural_injections}
%Let $c:A\rightarrow A+B$ be a monic natural injection, its pullback along $f_A+f_B:A'+B'\rightarrow A+B$ is the natural injection $c':A'\rightarrow A'+B'$.
%\end{lem}

%\begin{lem}
%\label{triangle_splitting}
%In a category with splitting and segregating coproducts, let $A_1, A_2, B_1, B_2, C_1, C_2$ be objects and $f:A_1+A_2\rightarrow B_1+B_2$, $g_1:B_1\rightarrow C_1$, $g_2:B_2\rightarrow C_2$, , $h_1:A_1\rightarrow C_1$, $h_2:A_2\rightarrow C_2$ morphisms such that $h_1+h_2=(g_1+g_2) \circ f$. The $f$ is of the form $f = f_1+f_2$, with $f:A_1\rightarrow B_1$ and $f_2:A_2\rightarrow B_2$. 
%\end{lem}
%\begin{proof}
%\end{proof}

We conclude this section with an application of this theorem to the core model
structure defined in \cite{droz12} on the category of finite graphs.  This model
structure agrees with the generalized core model structure defined in
Theorem~\ref{generalizedCoreModel}.

\begin{corollary} \label{graph-disj-split} The categories of finite graphs and
  of infinite graphs have splitting and disjoint coproducts and binary products
  distribute over binary coproducts.  Thus the generalized core and generalized
  cocore model structures on each are both left proper and right proper.  
\end{corollary}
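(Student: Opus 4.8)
The plan is to verify the three structural properties directly from the explicit descriptions of limits and colimits in the category of graphs, and then to invoke the properness results already established. Recall that a graph is a vertex set equipped with a symmetric edge relation and that a morphism is a vertex map preserving edges; the relevant (co)limits are all computed on the underlying vertex sets. The coproduct $A \sqcup B$ is the disjoint union, carrying no edges between the two summands, while the categorical product $A\times C$ has vertex set $V_A\times V_C$ with $(a,c)$ adjacent to $(a',c')$ precisely when $a\sim a'$ in $A$ and $c\sim c'$ in $C$. None of the arguments below use finiteness, so I would treat the finite and infinite cases simultaneously.

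For splitting coproducts, I would start from a homomorphism $f:X\to A\sqcup B$ and partition the vertices of $X$ according to whether they land in $A$ or in $B$. The key observation is that $f$ cannot send an edge of $X$ to a pair with one endpoint in $A$ and one in $B$, since $A\sqcup B$ has no such edges; hence every edge of $X$ has both endpoints on the same side. Letting $X_L$ and $X_R$ be the full subgraphs spanned by the two parts, this gives $X\cong X_L\sqcup X_R$ and exhibits $f$ as the coproduct of its restrictions $f_L:X_L\to A$ and $f_R:X_R\to B$, as required.

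For disjoint coproducts, I would note that $i_1$ and $i_2$ are injective on vertices and hence monic, and that pullbacks of graphs are again computed on underlying vertex sets exactly as in $\Set$, with the induced edge relation. The three squares then reduce to the corresponding statements for sets: the pullback of $i_1$ along itself is $A$, the pullback of $i_1$ and $i_2$ is the empty graph $\initial$ because the two images are disjoint, and symmetrically for $i_2$. For distributivity, I would compute $(A\sqcup B)\times C$ from the product description: its vertex set is $(V_A\sqcup V_B)\times V_C=(V_A\times V_C)\sqcup(V_B\times V_C)$, and an edge between $(x,c)$ and $(x',c')$ requires $x\sim x'$ in $A\sqcup B$, which forces $x,x'$ into the same summand, together with $c\sim c'$. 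Thus the edges split exactly along the two summands, yielding the natural isomorphism $(A\sqcup B)\times C\cong(A\times C)\sqcup(B\times C)$.

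With these three properties in hand the conclusion is immediate: left and right properness of the generalized core model structure follows from Proposition~\ref{core-proper}, right properness of the generalized cocore model structure from Corollary~\ref{cocore-right}, and left properness of the generalized cocore model structure from Proposition~\ref{cocore-left}, all of whose hypotheses we have just verified. I expect the only genuinely delicate point to be ensuring that the product used throughout is the categorical product of graphs, for which distributivity holds, rather than the Cartesian (box) product; once that is fixed, the splitting argument is the conceptual heart of the matter and the remaining checks are routine.
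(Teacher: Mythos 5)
Your proposal is correct and follows essentially the same route as the paper: verify splitting coproducts by partitioning the vertices of $X$ along $f$ and taking the induced subgraphs, reduce disjointness and distributivity to the corresponding facts in $\Set$, and then invoke Proposition~\ref{core-proper}, Corollary~\ref{cocore-right}, and Proposition~\ref{cocore-left}. Your explicit observation that no edge of $X$ can map across the two summands (and likewise that the edge relation forces distributivity of the categorical product) is the same point the paper's proof uses, just spelled out in slightly more detail.
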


\begin{proof}
  We will show that the categories of graphs have the desired properties; the
  rest follows from the above results.  First we check splitting coproducts.
  Suppose that we have a morphism $f:X \rto A \sqcup B$; this is a map from the
  set of vertices of $X$ to the disjoint union of the vertices of $A$ and $B$.
  Let $X_L$ be the complete subgraph of $X$ on the preimage of the vertices of
  $A$ and let $X_R$ be the complete subgraph on the preimage of the vertices of
  $B$.  $X_L$ and $X_R$ are disjoint subgraphs of $X$ whose union is $X$, so we
  see that $X \cong X_L\sqcup X_R$ and $f = (f|_A: X_L \rto A) \sqcup (f|_B:X_R
  \rto B)$.  Thus we have splitting coproducts.  

  To check that we have disjoint coproducts we just need to check the definition
  on the vertices, where it holds because it holds in the category of sets.

  It remains to show that binary products distribute over binary coproducts.  In
  particular, we want to show that for graphs $A$, $B$ and $C$ we have
  \[A\times(B\sqcup C) \cong (A\times B) \sqcup (A\times C).\] This follows from
  the definitions of graphs and the fact that products distribute over
  coproducts in the category of sets.
\end{proof}

%%%%%%%%%%%%%%%%%%%%%%%%%%%%%%%%%%%%%%%%%%%%%%%%%%%%%%%%%%%%%%%%%%%%%%%%%%%%%%%%%%%%%%%%%%%%%%%%%%%%%%%%%%%%%%%%%%%%%%%%%%%%%%%%%%%%%%
\section{Concepts of cores for infinite graphs} \label{infinite-cores}

We called the model structure constructed in Section~\ref{sec:gencore} the
``generalized core model structure'' because in the case when $\C$ is the
category of finite graphs\footnote{By ``finite graph'' we mean an undirected
  graph with no repeated edges.}, homotopy types correspond exactly to cores.
More precisely, in the model structure two graphs are weakly equivalent exactly
when they have the same core.  (For more on the core, see \cite{godsilroyle},
section 6.2.) Inspired by this, we can consider the generalized core model
structure on the category of all graphs, and ask for a classification of the
homotopy types of this category.  One conjecture is that there should be a notion
of a ``core'' for a (possibly infinite) graph such that cores classify homotopy
types in the generalized core model structure.

Diverse generalizations of the notion of core to infinite graphs have been
explored by Bauslaugh in \cite{bauslaugh}.
\begin{enumerate}
\item An s-core is a graph such that all endomorphisms are surjections (on the
  vertices).
\item An r-core is a graph without proper retractions.
\item An a-core is a graph such that all endomorphisms are automorphisms.
\item An i-core is a graph for which all endomorphisms are injections.
\item An e-core is a graph such that all endomorphisms preserve non-adjacency.
\end{enumerate}
These definitions are known to be equivalent for finite graphs, and are all
proved to be different when considering infinite graphs in \cite{bauslaugh}.

Once a definition of core is chosen, we can define {\it a core of a graph} $G$
as one of its subgraphs $H$, which is a core and for which a morphism
$G\rightarrow H$ exists. It would also make sense to define the core as a
retraction of $G$; however, as this definition is more restrictive than the
previous one, the results of this section will also hold under this definition.

It is natural to ask if applying our generalized core construction to the
category of all graphs gives a notion of core that corresponds to one of those
defined above.  More precisely, is it the case that two graphs are weakly
equivalent if they have the same ``core'', for some notion of ``core'' defined
above?  The answer turns out to be ``no.''  We prove this by exhibiting two
graphs, one of which does not contain a core in the sense of (1)-(3), and one of
which does not contain a core in the sense of (3)-(5).  As every graph has a
``homotopy type'' in the generalized core model structure, this means that none
of these definitions of a core classify homotopy types in the case of the
generalized core model structure.  

Note that while the definitions above were originally given for general
``structures'' (understood as combinatorial structures), and exemplified by
oriented graphs, it can be shown (\cite{pultr}) that all of the relevant
examples and results can be transferred to the category of undirected graphs
using a well-chosen fully faithful ``edge-replacement'' functor.  Thus it
suffices to show that there exist directed infinite graphs with no core, and it
will also hold for undirected graphs.

We construct our examples by adapting methods from \cite{bauslaugh}.

\begin{figure}
\centering
\includegraphics[trim = 0mm 140mm 0mm 0mm, clip, width=0.8\textwidth]{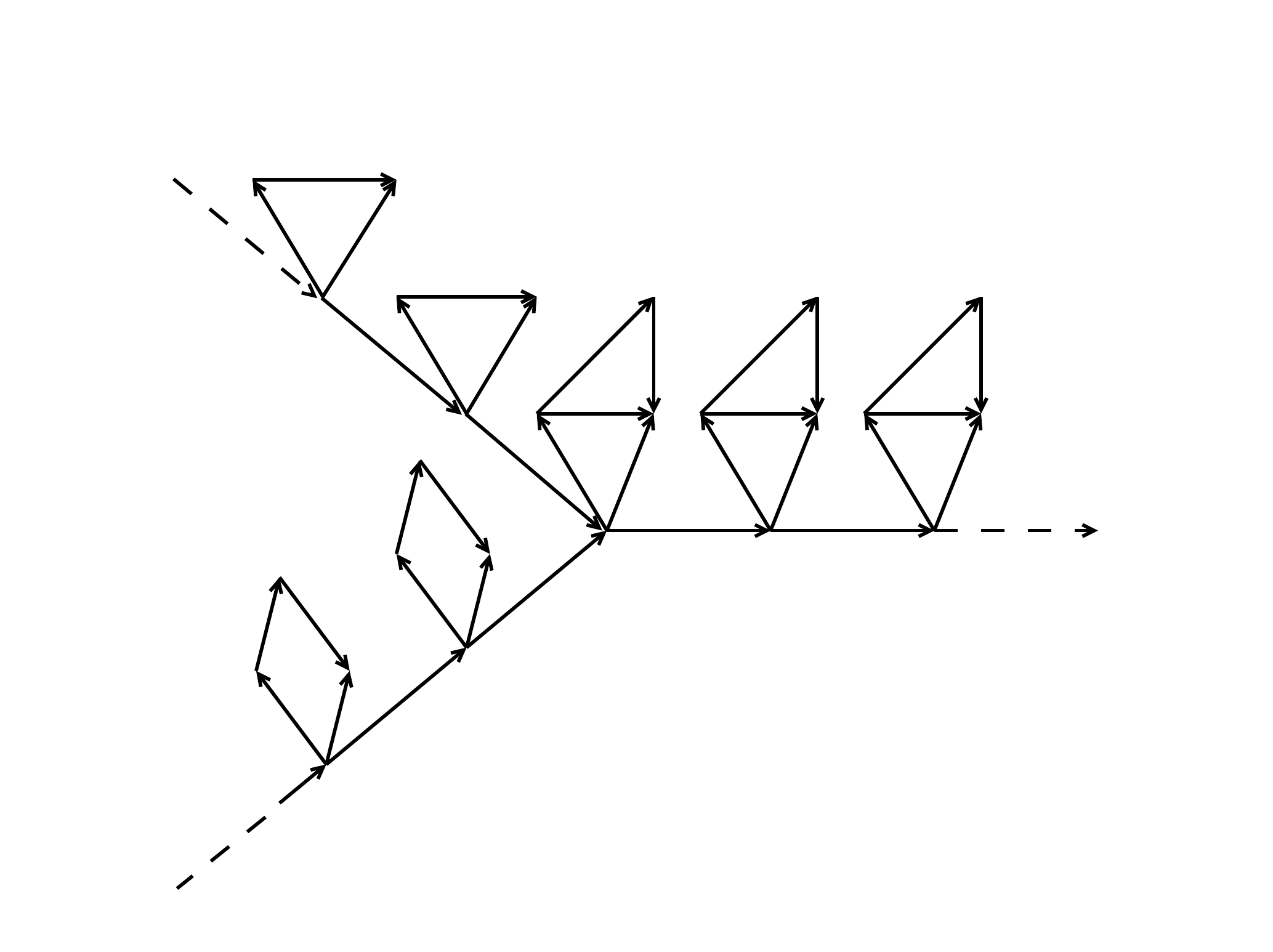}
\caption{Prolongating this graph in three directions without end, we obtain the {\it zipper} graph.}
\label{fig:zipperGraph}
\end{figure}

\begin{thm}
  Let $G$ be the graph with vertices $\{1,2,\ldots\}$ and with an edge from $n$
  to $n+1$ for all $n$.  Then $G$ has no s-core, r-core or a-core. The zipper
  graph in Figure~\ref{fig:zipperGraph} has no a-core, i-core or e-core.
\end{thm}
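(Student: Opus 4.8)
The plan is to reduce, for each of the five notions, the statement ``$G$ has an X-core'' to the existence of a subgraph $H\subseteq G$ admitting a morphism $G\rightarrow H$ and satisfying the defining property of an X-core, and then to rule out all such $H$. Since the paper observes that the ``retraction'' definition of a core is more restrictive than the ``morphism'' one, it suffices to argue under the weaker (morphism) definition: if no subgraph receiving a morphism from $G$ is an X-core, then certainly no retract is. The whole argument is therefore combinatorial: compute the endomorphism monoid of $G$, describe the shapes of the subgraphs that receive a morphism from $G$, and check that the relevant core conditions are violated by explicit endomorphisms that survive on every candidate $H$.

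First I would treat the ray $G=(1\rightarrow 2\rightarrow\cdots)$. Because the only directed edges are the pairs $(n,n+1)$, any homomorphism $f\colon G\rightarrow G$ must satisfy $f(n+1)=f(n)+1$, so $\mathrm{End}(G)=\{\,f_c\colon n\mapsto n+c\mid c\geq 0\,\}$ is exactly the monoid of forward shifts. In particular the image of every nonidentity endomorphism is a proper sub-ray $\{c+1,c+2,\dots\}\cong G$, so any subgraph $H$ admitting a morphism from $G$ contains such a copy of $G$, and the shift $f_1$ restricts to a non-surjective, non-bijective self-map of that copy. Hence no candidate $H$ can be an s-core or an a-core. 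For the r-core claim I would use that $f_c\circ f_c=f_{2c}$, so the identity is the only idempotent endomorphism; together with the characterization of retractions in Lemma~\ref{initialTerminalLifting} this pins down the retracts of $G$, reducing the r-core question to the same rigidity and to the absence of any strictly smaller core. This disposes of cases $(1)$--$(3)$ for the ray.

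Next I would analyze the zipper graph. The goal is to produce three explicit endomorphisms witnessing the failure of $(3)$--$(5)$: a noninjective endomorphism (so that no image can be an i-core), an endomorphism carrying a nonadjacent pair to an adjacent pair (so that no image can be an e-core), and---since either of these is in particular not an automorphism---the failure of the a-core condition. Following the method of \cite{bauslaugh}, the three-directional prolongation is chosen precisely so that these collapsing self-maps can be defined on the whole graph and, more importantly, so that they persist after passing to any subgraph that receives a morphism from the zipper. Concretely, I would show that every morphism out of the zipper has image again ``zipper-like,'' carrying its own noninjective and nonadjacency-collapsing endomorphisms, so that no candidate core $H$ can satisfy $(3)$, $(4)$, or $(5)$.

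The main obstacle is this last point in each case: it is not enough that $G$ itself fails to be an X-core; one must show that \emph{every} subgraph $H$ with a morphism $G\rightarrow H$ fails as well. For the ray this follows cleanly from the rigidity of $\mathrm{End}(G)$, but for the zipper it requires the careful Bauslaugh-style bookkeeping that the defining collapsing behaviour is inherited by all such images. Verifying this descent---equivalently, that the zipper admits no ``minimal'' image on which all endomorphisms become automorphisms, injections, or nonadjacency-preserving maps---is where the real work lies, and it is the step I would expect to dominate the proof.
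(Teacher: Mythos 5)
Your treatment of the ray agrees with the paper's for the s-core and a-core claims: both arguments identify the endomorphisms of $G$ with the forward shifts and use the fact that any candidate core $H\subseteq G$ must contain a full subray $R_{c+1}=\{c+1,c+2,\dots\}$ as the image of the composite $G\to H\hookrightarrow G$. (One repair is needed even here: the shift restricted to that subray is a self-map of the subray, not an endomorphism of $H$; to conclude that $H$ is not an s-core or a-core you should shift \emph{all} of $H$ by some $k\geq c$, which lands in $R_{c+1}\subseteq H$ and gives a genuine endomorphism of $H$ that is injective but not surjective.) The real problem is your r-core step. Your computation that $f_c\circ f_c=f_{2c}$, so that the identity is the only idempotent endomorphism, shows exactly that $G$ has \emph{no} proper retractions --- and ``a graph without proper retractions'' is the stated definition of an r-core. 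So your rigidity fact makes $G$ (and every subray, being isomorphic to $G$ and receiving the shift $f_c:G\to R_{c+1}$) satisfy the r-core property, which would \emph{exhibit} an r-core of $G$ rather than rule one out. The follow-up clause, ``reducing the r-core question to \ldots{} the absence of any strictly smaller core,'' is circular, since the existence of such a core is precisely what is in question; and the appeal to Lemma~\ref{initialTerminalLifting} adds nothing, as that lemma only recasts retractions via a lifting property. The paper's proof rests on the assertion that $G$ is \emph{not} an r-core; whatever reading of ``proper retraction'' justifies that assertion, your proposed route proves a statement of the opposite sign and cannot be completed as written.

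For the zipper graph your proposal contains no proof, only a plan, and you yourself flag the decisive step (``where the real work lies'') without carrying it out. Moreover, the plan misses the one observation that makes that step short in the paper: every endomorphism of the zipper is \emph{surjective} (the outgoing ray can only be shifted forward, and the decorations prevent exchanging the two incoming rays), and every non-trivial one is non-injective. Surjectivity implies that for any morphism $f:Z\to H\subseteq Z$ the composite with the inclusion $\iota:H\to Z$ hits every vertex of $Z$, so $H$ is spanning; then for any endomorphism $g$ of $Z$ that identifies two vertices, or that sends a non-adjacent pair to an edge, the composite $f\circ g\circ\iota$ is an endomorphism of $H$ with the same defect, because the offending vertices lie in $H$ and $f$ carries edges of $Z$ to edges of $H$. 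This disposes of the i-core, a-core and e-core properties for every candidate $H$ at once, with no need for your ``every image is zipper-like'' descent --- which is exactly the part you could not execute. Without the surjectivity observation, the explicit bad endomorphisms, and the verification that the decorations rigidify the zipper, the second half of the theorem remains unproven in your proposal.
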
 

\begin{proof}
  Any endomorphism $\varphi$ of $G$ is uniquely determined by $\varphi(1)$, and
  must have an image isomorphic to itself.  Thus $G$ has a core if and only if
  it is a core.  However, it is clearly not an s-core, an r-core or an a-core,
  and thus $G$ has none of these cores.

  The zipper graph is composed of three infinite rays with a common point, two
  of the rays going to the common point, one ray coming out of the common point
  and additional decorations. We observe that the endomorphisms of the zipper
  graph map the outgoing ray to itself by a ``shift toward the right''. The
  decorations insure the absence of an automorphism mapping one of the incoming
  rays to the other. Since the non-trivial endomorphisms of the zipper graph are
  non-injective but surjective, the zipper graph has no i-core or
  a-core. Moreover, looking at non-adjacent vertices of the decorations of the
  lower incoming ray, we see that they can sometimes be mapped to adjacent
  vertices. This shows that the zipper graph has no e-core and concludes the
  proof of our theorem.
\end{proof} 

We conclude that the generalized core model structure has a notion of homotopy
type which does not correspond to any of Bauslaugh's definition of cores.

%%%%%%%%%%%%%%%%%%%%%%%%%%%%%%%%%%%%%%%%%%%%%%%%%%%%%%%%%%%%%%%%%%%%%%%%%%%%%%%%%%%%%%%%%%%%%%%%%%%%%%%%%%%%%%%%%%%%%%%%%%%%%%%%%%%%%%5

\bibliographystyle{alpha} 
\bibliography{JMD-IZ}
\end{document}